\newcommand{\rom}[1]{\expandafter\@slowromancap\romannumeral #1@}
\numberwithin{equation}{section}
\begin{document}

\title{Existence of Fractional Nonlocal Neutral Stochastic  Differential Equation of Order $1<q<2$ with Non-instantaneous Impulses and State-Dependent Delay}
\author{Surendra Kumar \and Anjali Upadhyay}

\institute{Surendra Kumar \at
              Faculty of Mathematical Sciences, Department of Mathematics, University of Delhi, New Delhi, 110007 \\
              \email{mathdma@gmail.com}  \and Anjali Upadhyay \at  Faculty of Mathematical Sciences, Department of Mathematics, University of Delhi, New Delhi, 110007\\
      \email{upadhyayanjali123@gmail.com}}
\maketitle

\begin{abstract}
This article addresses a new class of fractional nonlocal neutral stochastic differential system of order $q \in (1,2)$  including non-instantaneous impulses (NIIs) and state-dependent delay (SDD) with the Poisson jumps and the Wiener process in Hilbert spaces. We examine the existence of a mild solution without assuming the compactness of the resolvent operators. The Mönch fixed point theorem is used along with the Hausdorff measure of noncompactness. An example is constructed to verify the theoretical developments.
\end{abstract}
\keywords{Fractional Stochastic differential equation \and Fixed point \and Resolvent operator  \and  State-dependent delay  \and Hausdorff measure of noncompactness \and Mild solution}
\subclass{35A01 \and 34A37 \and 34K40 \and 60H10}

\baselineskip12pt
\section{Introduction}  Fractional delay differential equations arise in abstract formulation of numerous real world problems in the fields of science and engineering, biology and medicine. If the delay is also depending on the state variable, then it is referred as SDD \cite{Aiello&Freedman,Cao&Gard,Santos&Mallika,Hernandez&Ladeira}. It is well known that fluctuations due to noise in deterministic models affect qualitative properties (existence, uniqueness and stability) of a dynamical system. Therefore, many researchers switch  to  stochastic models instead of deterministic ones \cite{Stochastic}.
Several problems in economics, population dynamics among others are framed  by stochastic differential equations involving jump processes  which are mostly depending on the Poisson random measure \cite{Mao,Prato1992,Cont&Tankov}. However, the nonlocal conditions, initiated by  Byszewski \cite{Byszewski}, give adequate effect to the system compared to the initial condition. For more facts about the significance of nonlocal conditions one can see \cite{Ren2013,Byszewski&Lakshmikantham,Chadha&Bahuguna}.  Nowadays, neutral stochastic delay differential equations are grabbing eyeballs of many researchers due to their applications in applied mathematics \cite{Hernandez&Regan&Bala,Hernandez&Henriquez,Zhou&Jiao}. 

On the other hand, during the modelling of some problems in medical domain it is important to involve impulses. Impulses are generally of two types: instant impulses and non-instant impulses. The concept of differential equations involving instantaneous impulses has gained extensive growth in past decades \cite{Lakshmikantham&Bainov,Samoilenko&Perestyuk,Shu&Shi2016,Chadha&Bora2017,Chadha&Pandey2015}.  Deng et al. \cite{DengShu&Mao2018} considered a class of neutral stochastic functional differential equation with instant impulses driven by fBm, and established sufficient conditions for the existence of mild solutions with noncompact semigroup by utilizing the Mönch fixed point theorem and the Hausdorff measure of noncompactness. Ji and Li \cite{Ji&Li2012} studied the existence of impulsive differential equation  by applying the measure of noncompactness property along with Darbo--Sadovskii’s fixed point theorem under compactness conditions and Lipschitz conditions.

 The concept of NIIs was introduced by Hernandez and O'Regan \cite{Hernandez&Regan}. After this, several researchers studied various qualitative properties of differential equations with NIIs, and still it is an object of interest. Pierri, O'Regan and Rolnik \cite{Pierri&O'Regan}, Yu and Wang \cite{Yu&Wang}, Wang and Fečkan \cite{Michal&Wang}, Boudaoui, Caraballo and Ouahab \cite{Boudaoui&Caraballo}, Yan and Jia \cite{Yan&Jia}, Malik and Kumar \cite{Malik&Kumar} among others studied different qualitative properties of integer order differential equations with NIIs. \par       
Suganya et al. \cite{Suganya&Baleanu} investigated the existence results for a class of neutral fractional integro-differential equation involving SDD and NIIs by applying the Darbo fixed point theorem with the Hausdorff measure of noncompactness. Gautam and Dabas \cite{Gautam&Dabas2015} studied an abstract neutral fractional differential equations with NIIs and SDD and established the existence result by adopting the theory of resolvent operators and the Lerey--Schauder Alternative. Yan and Jia \cite{Yan&Jia2016}, Yan and Lu \cite{Yan&Lu2017} obtained sufficient conditions for the existence of mild solutions for fractional stochastic integro-differential equation consisting of SDD and NIIs, and also discussed the problem of optimal control. Yan and Han \cite{YanHan2019} investigated the existence of mild solution, optimal mild solution for a class of fractional partial stochastic differential systems with NIIs by using suitable fixed point theorem, and also considered the exact controllability. Using various fixed point theorems, Chaudhary and Pandey \cite{Chaudhary&Pandey} showed the existence of a solution for a class of neutral fractional stochastic integro-differential equations with SDD and NIIs.
Inspired by the above facts and discussions, we are dealing with the existence of a mild solution for the following neutral stochastic fractional differential systems with SDD and NIIs:
\begin{equation}\label{mainequation}
\begin{cases}
\begin{aligned}
^CD _\tau^q [\xi(\tau)- b(\tau,\xi_{\varrho(\tau,\xi_\tau)})]  =& \mathscr{A}[\xi(\tau)- b(\tau,\xi_{\varrho(\tau,\xi_\tau)})] + \mathbb{J}_\tau^{2-q} [\int_{\gamma} f(s,\vartheta,\xi_{\varrho(s,\xi_s)}){\widetilde{Z}}(ds,d\vartheta)] \\ &+  \mathbb{J}_{\tau}^{1-q} [\int_{-\infty}^\tau  h(z,\xi_{\varrho(z,\xi_z)})d{\omega(z)}], \ \hspace{.5cm}  \tau\in \mathop{\cup}\limits_{i=0}^{\mathcal{N}}(s_i,r_{i+1}];
\end{aligned}\\
\xi(\tau)= l_i(\xi_{r_i})+m_i(\tau,\xi_{\varrho(\tau,\xi_\tau)}),\hspace{5.8cm} \tau\in \mathop{\cup}\limits_{i=1}^{\mathcal{N}}(r_i,s_i];\\
\xi(\tau)+k_1(\xi) =\xi_0= \uppsi(\tau), \hspace{6.5cm} \tau\in (-\infty,0];\\
[\xi(\tau)- b(\tau,\xi_{\varrho(\tau,\xi_\tau)})]_{\tau=0}^\prime +k_2(\xi)= \xi_1 \in \mathcal{V};\\
[\xi(\tau)- b(\tau,\xi_{\varrho(\tau,\xi_\tau)})]^\prime_{\tau=s_i} = 0,\hspace{6.2cm} i=1,2,\cdots,\mathcal{N}.
\end{cases}
\end{equation}
Here, $^CD_\tau^q,$ and $\mathbb{J}^q(\cdot)$ denote the Caputo fractional derivative, and the fractional integral of order $q\in (1,2)$, respectively. The operator $\mathscr{A} : D(\mathscr{A})\subset\mathcal{V} \rightarrow \mathcal{V}$ is  closed, linear and densely defined and $\xi(\cdot)$ represents the stochastic process with values in a real separable Hilbert space $(\mathcal{V},\langle\cdot,\cdot\rangle,\|\cdot ~\|)$. The history  $\xi_\tau :(- \infty,0] \rightarrow \mathcal{V}$ is given by $\xi_\tau(\varpi) = \xi(\tau+\varpi);\ \varpi \in(-\infty,0]$ and $\xi_\tau$ is the element of an abstract phase space $\mathfrak{D}$, which is explained in Sect. \ref{Sect.2}. \par Let $\mathcal{I}=[0,a],\ \mathcal{I}_0=(-\infty,0]$. The mappings $b: \mathcal{I} \times\mathfrak{D}\  \rightarrow \mathcal{V}, \ h:\mathcal{I} \times \mathfrak{D} \rightarrow  \mathbb{L}_{\mathtt{Q}}(\mathcal{K},\mathcal{V}),\  f:\mathcal{I} \times \gamma \times \mathfrak{D} \rightarrow \mathcal{V}, \ l_i : \mathfrak{D} \rightarrow \mathcal{V},\ m_i :(r_i,s_i] \times \mathfrak{D} \rightarrow \mathcal{V},\ i=1,2,\cdots, \mathcal{N}$ and $k_1, k_2 : \mathfrak{D} \rightarrow \mathcal{V}$ are appropriate functions and $\varrho: \mathcal{I} \times \mathfrak{D} \rightarrow (-\infty,a]$ is continuous. The initial data $\uppsi$ is $\mathscr{F}_0$-measurable $\mathfrak{D}$-valued, $\xi_1$ is $\mathcal{V}$-valued stochastic processes independent of the Wiener process $\omega(\cdot)$ and the Poisson point process $\tilde{z}(\cdot)$ with finite second moment.
 
 Let $0=r_0=s_0<r_1 <s_1 < \cdots <s_\mathcal{N} <r_{\mathcal{N}+1}=a$ be the impulsive points. The impulses start abruptly at $r_i$ and its effect continues on $(r_i,s_i]$. The function $\xi(\cdot)$ takes distinct values in the two subintervals $(r_i,s_i], \ (s_i,r_{i+1}]$ and is continuous at $s_i$.
 
 In this work, we consider a weak assumption for the nonlinear terms that is  Carath$\acute{e}$odory condition, also the compactness of  resolvent operators are not required for the development of our main result. we obtained a sufficient condition for the existence of a mild solution by the virtue of the Hausdorff measure of noncompactness with the M\"{o}nch fixed point theorem, theory of fractional resolvent operators and stochastic analysis.
  
A brief outline of this article is as follows: Sect. \ref{Sect.2} is devoted to notations, definitions, basic concepts of fractional calculus, phase space and the measure of noncompactness. In sect. \ref{Sect.3}, we prove the existence result for the system (\ref{mainequation}) by  utilizing the M\"{o}nch fixed point theorem. In sect. \ref{Sect.4}, an example is provided to show the feasibility of obtained result.
\section{Preliminaries and Basic Concepts}\label{Sect.2}
 We start this section by introducing the Wiener process and the Poisson point process. Let $(\Omega,\mathscr{F},\{\mathscr{F}_\tau\}_{\tau\geq 0},\mathbb{P})$ be a filtered complete probability space such that the filtration $\{\mathscr{F}_\tau\}_{\tau\geq 0}$ \ is a right continuous increasing family and $\mathscr{F}_{0}$ contains all $\mathbb{P}$-null sets. A $\mathscr{F}$-measurable function $\xi(\cdot): \Omega \rightarrow \mathcal{V}$ is called a random variable and $S = \{ \xi(\tau, \cdot): \Omega \rightarrow \mathcal{V}; \ \tau\in \mathcal{I}\}$ is called a stochastic process. We will write $\xi(\cdot)$ instead $\xi(\cdot,\omega)$. Let ${\omega(\tau)}_{\tau\geq 0}$ be $\mathcal{K}$-valued Wiener process with a finite trace nuclear covariance operator $\mathtt{Q}\geq 0$, where $\big(\mathcal{K},\langle\cdot,\cdot\rangle,\|\cdot\|\big)$ is a Hilbert space. Let $\tilde{z}={\tilde{z}(\tau)\ ; \ \tau \in D_{\tilde{z}}}$ be a stationary ${\mathscr{F}}_\tau$-Poisson point process with characteristic measure $\varkappa$. Further, let $Z(d\tau,d\vartheta)$ be the Poisson counting measure associated with $\tilde{z}$. Then $Z(\tau, \gamma) = \mathop{\sum}\limits_ {t\in D_{\tilde{z}},t\leq \tau} I_{\gamma}(\tilde{z}(t))$ with measurable set $ \gamma \in \widetilde{B}(\mathcal{K}\setminus\{0\})$ that denotes the Borel $\sigma$-field of $\mathcal{K}\setminus \{0\}$.
Let $\widetilde{Z}(d\tau,d\vartheta)\  =\ Z(d\tau,d\vartheta)- \varkappa (d\vartheta)d\tau$ be the compensated Poisson measure that is independent of $\omega(\tau)$. Let $\mathcal{M}_2(\mathcal{I}\times \gamma; \mathcal{V})$ be the space of all functions $\phi:\mathcal{I} \times \gamma \rightarrow \mathcal{V}$ such that $\int_{0}^{a} \int_{\gamma}\mathbb{E}\|\phi(\tau,\vartheta) \|^2\varkappa (d\vartheta)d\tau  < \infty$, the $\mathcal{V}$-valued stochastic integral $ \int_{0}^{a} \int_{\gamma} \phi(\tau,\vartheta)\widetilde{Z}(d\tau,d\vartheta)$ is defined, and $\phi$ is called a centred square integrable martingale. Let $\{ e_j\}_{j=1}^{\infty}$ be a complete orthonormal basis for $\mathcal{K}$. Let $\{\beta _j(\tau)\}_{j=1}^\infty $ be a sequence of real valued one-dimensional standard Brownian motions mutually independent over $(\Omega,\mathscr{F},\mathbb{P})$. For $\tau\geq0$, set $\omega(\tau) = \mathop{\sum}\limits_{j=1}^{\infty}\sqrt{ \nu _{j}} \beta_{j} (\tau) e_{j}$, where $\nu_{j} \geq 0, \ j=1,2,...$. Let $ \mathbb{L}(\mathcal{V},\mathcal{K})$ denotes the space of all bounded linear operators from $\mathcal{V}$ to $\mathcal{K}$. Let $\mathtt{Q}\in \mathbb{L}(\mathcal{K})$ be an operator defined by $\mathtt{Q}e_i = \nu_i e_i$ with $Tr(\mathtt{Q}) = \mathop{\sum}\limits_{j=1}^{\infty} \nu_j < \infty$, $Tr(\mathtt{Q})$ denotes the trace of the operator $\mathtt{Q}$. Then $\mathcal{K}$-valued  stochastic process $\omega(\tau)$ is called a $\mathtt{Q}$-Wiener process. Let  $\mathscr{F}_\tau = \sigma(\omega(t) : 0\leq t \leq \tau)$ be the $\sigma$-algebra generated by $\omega$ and $\mathscr{F}_\tau = \mathscr{F}$. For $\Theta \in \mathbb{L}(\mathcal{K},\mathcal{V})$, define $\langle\Theta, \Theta \rangle=\| \Theta\|^2_{\mathtt{Q}} = Tr(\Theta \mathtt{Q} \Theta^{\ast})= \mathop{\sum}\limits_{j=1}^{\infty}\| \sqrt{ \nu_{j}} \Theta e_{j}\|^2.$ If $\| \Theta \|_\mathtt{Q} < \infty$, then $\Theta$ is called a $\mathtt{Q}$-Hilbert--Schmidt operator. Let  $\mathbb{L}_\mathtt{Q}(\mathcal{K},\mathcal{V})$ be the space of all $\mathtt{Q}$-Hilbert--Schmidt operators $\Theta$ from $\mathcal{K}$ into $\mathcal{V}$. The completion $\mathbb{L}_{\mathtt{Q}}(\mathcal{K},\mathcal{V})$ of $\mathbb{L}(\mathcal{K},\mathcal{V}) $ with respect to the topology induced by the norm $\|\cdot\|_\mathtt{Q}$ is a Hilbert space with the above norm topology . Further, let $\mathbb{L}_{\mathscr{F}_ \tau}^2(\Omega, \mathcal{V})$ be the Banach space of all $\mathscr{F}_\tau$-measurable square integrable random variables. We introduce the space $\mathtt{PC}(\mathcal{I},\mathcal{V})$ formed by all $\mathscr{F}_\tau$-adapted measurable, $\mathcal{V}$-valued stochastic processes $\{\xi(\tau) : \tau\in\mathcal{I}\}$ such that $\xi(\tau)$ is continuous at $\tau\neq r_i, \  \xi_{r_i}= \xi_{r_{i}^- },\ \xi_{r_{i}^+ }$ exists for all $i = 1,2,...,\mathcal{N}.$\\
Also, we suppose that $\mathtt{PC}(\mathcal{I},\mathcal{V})$ is endowed with the norm $\|\xi\|_{\mathtt{PC}} = \big(\mathop{\sup}\limits _{0\leq \tau\leq a} \ \mathbb{E}\|\xi(\tau)\|^2\big)^\frac{1}{2}$, then $(\mathtt{PC}(\mathcal{I},\mathcal{V}), \|\cdot\|_{\mathtt{PC}})$ is a Banach space.

 Now we employ the definition of the phase space $(\mathfrak{D}, \|\cdot\|_{\mathfrak{D}})$ which is similar as introduced in \cite{Hino,Hale&Kato1978}. More accurately, $\mathfrak{D}$ is a semi-normed linear space of $\mathscr{F}_0$-measurable mappings from $\mathcal{I}_0$ into $\mathcal{V}$ and satisfying the subsequent axioms:\\
\rom{1}.\hspace{0.5cm} If $\xi: \ (-\infty, c+a] \rightarrow \mathcal{V},\  a>0$, is such that $\xi|_{[c,c+a]} \in  \mathtt{PC}([c,c+a],\mathcal{V})$ and $\xi_c \in \mathfrak{D}$, thereafter for every $\tau \in [c,c+a]$ the following  hold:
\begin{enumerate}
\item $\xi_\tau \in \mathfrak{D}$; \
 \item $\|\xi(\tau)\| \leq N_1 \|\xi_\tau\|_{\mathfrak{D}}$, where $N_1>0$ is a constant;\
 \item $\|\xi_\tau\|_{\mathfrak{D}} \leq \  N_2(\tau-c) \sup \{\|\xi(t)\|:\ c\leq t \leq \tau\} + N_3(\tau-c)\|\xi_c\|_\mathfrak{D}$,\\ where $N_2, N_3 : \mathds{R}^+\cup\{0\} \rightarrow [1,\infty)$, $N_2$ is  continuous, $N_3$ is  locally bounded and independent of $\xi(\cdot)$.
\end{enumerate}
\rom{2}.\hspace{.5cm} The space $\mathfrak{D}$ is complete.\\
The following result is a consequence of the phase space axioms:
\begin{lemma} \label{lemma2.1}\cite{Yan&Jia}
Let $\xi : (-\infty, a] \rightarrow \mathcal{V}$  be an $\mathscr{F}_\tau$-adapted measurable process such that \\$\xi|_{\mathcal{I}} \in \mathtt{PC}(\mathcal{I},\mathcal{V})$, and $\xi_0 = \uppsi(\tau) \in \mathbb{L}_{\mathscr{F}_0}^2(\Omega, \mathfrak{D})$, then
\begin{center}
$\|\xi_\tau\|_{\mathfrak{D}} \leq \ N_2^{\star}\mathop{\sup}\limits_{t\in  \mathcal{I}} \mathbb{E}\|\xi(t)\|+N_3^{\star}\|\uppsi\|_{\mathfrak{D}}$,
\end{center}
where $ N_2^{\star} = \mathop{\max}\limits_{t\in \mathcal{I}} N_2(t)$ and $ N_3^{\star} =  \mathop{\sup}\limits_{t\in \mathcal{I}} N_3(t)$.

\end{lemma}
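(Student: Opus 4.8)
The plan is to derive the estimate directly from the third estimate in axiom \rom{1} of the phase space, since the lemma is precisely the specialization of that axiom to the base point $c=0$, combined with a uniformization of the coefficient functions over the compact interval $\mathcal{I}$.

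First I would verify that the axiom applies. Its hypotheses require $\xi|_{[c,c+a]}\in\mathtt{PC}([c,c+a],\mathcal{V})$ and $\xi_c\in\mathfrak{D}$; taking $c=0$ so that $[c,c+a]=\mathcal{I}$, both are supplied by the assumptions $\xi|_{\mathcal{I}}\in\mathtt{PC}(\mathcal{I},\mathcal{V})$ and $\xi_0=\uppsi\in\mathbb{L}^2_{\mathscr{F}_0}(\Omega,\mathfrak{D})$. The axiom then yields, for every $\tau\in\mathcal{I}$,
$$\|\xi_\tau\|_{\mathfrak{D}}\le N_2(\tau)\sup\{\|\xi(t)\|:0\le t\le\tau\}+N_3(\tau)\|\xi_0\|_{\mathfrak{D}}.$$

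Next I would uniformize the coefficients. Because $N_2$ is continuous on the compact interval $\mathcal{I}=[0,a]$ it attains its maximum, so $N_2(\tau)\le N_2^{\star}:=\max_{t\in\mathcal{I}}N_2(t)$; because $N_3$ is locally bounded on $\mathcal{I}$ we have $N_3(\tau)\le N_3^{\star}:=\sup_{t\in\mathcal{I}}N_3(t)<\infty$. Substituting $\|\xi_0\|_{\mathfrak{D}}=\|\uppsi\|_{\mathfrak{D}}$ and enlarging the running supremum to the whole interval, $\sup\{\|\xi(t)\|:0\le t\le\tau\}\le\sup_{t\in\mathcal{I}}\|\xi(t)\|$, produces the asserted bound.

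The only point requiring care, and the one I expect to be the main (minor) obstacle, is the stochastic interpretation. The phase-space axioms are pathwise statements for $\mathcal{V}$-valued maps, whereas $\xi$ here is a stochastic process and the norms in the conclusion are understood in the mean (second-moment) sense consistent with the definition of $\|\cdot\|_{\mathtt{PC}}$. I would therefore apply the estimate $\omega$-wise and then pass to expectations, so that the running supremum $\sup_{t\in\mathcal{I}}\|\xi(t)\|$ is replaced by $\sup_{t\in\mathcal{I}}\mathbb{E}\|\xi(t)\|$; this is the step where one must be consistent about the order of $\sup$ and $\mathbb{E}$, but it does not affect the structure of the argument.
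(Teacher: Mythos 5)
Your overall route is exactly the one the paper intends: the paper offers no proof of this lemma at all, merely labelling it ``a consequence of the phase space axioms'' and citing Yan--Jia, and the intended derivation is precisely your specialization of axiom \rom{1}(3) to $c=0$ followed by uniformizing $N_2,N_3$ over the compact interval $\mathcal{I}$. That part of your argument is fine, including the observation that continuity of $N_2$ and local boundedness of $N_3$ give finite constants $N_2^{\star}$ and $N_3^{\star}$.

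The one step that does not work as you describe it is the last one. If you apply axiom \rom{1}(3) $\omega$-wise and then take expectations, you obtain a bound involving $\mathbb{E}\sup_{t\in\mathcal{I}}\|\xi(t)\|$, and the elementary inequality goes the wrong way: $\mathbb{E}\sup_{t}\|\xi(t)\|\geq \sup_{t}\mathbb{E}\|\xi(t)\|$, so you cannot ``replace'' the former by the latter to reach the stated conclusion. The way this is resolved in the stochastic phase-space literature (and implicitly in this paper, where $\mathfrak{D}$ is declared to consist of $\mathscr{F}_0$-measurable maps) is that the axioms are \emph{posited} directly in the mean-norm framework, i.e.\ axiom \rom{1}(3) is read with $\mathbb{E}\|\xi(t)\|$ (or $(\mathbb{E}\|\xi(t)\|^2)^{1/2}$) in place of the pathwise norm and with $\|\cdot\|_{\mathfrak{D}}$ already incorporating the expectation. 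With that reading your two remaining steps (take $c=0$, enlarge the supremum and the coefficients) give the lemma immediately; without it, the pathwise-then-expectation argument has a genuine gap at the interchange of $\sup$ and $\mathbb{E}$.
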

\begin{definition}\cite{Kilbas&Trujillo,Podlubny}
For $g \in L^1((0,a), \mathds{R})$, $0<a < \infty$ the fractional integral of order $q>0$ with lower limit 0  is given by
\begin{align*}
\mathbb{J}_\tau^q g(\tau)= \frac{1}{\Gamma(q)}\int_0^\tau (\tau-t)^{q-1} g(t)dt.
\end{align*} 
\end{definition}
\begin{definition}\cite{Kilbas&Trujillo,Podlubny}
For $g \in C^{m-1}((0,a), \mathds{R})\cap \mathbb{L}^1((0,a),\mathds{R})$, $0<a < \infty$ the Caputo derivative of order $q>0$ with lower limit 0 is defined by
\begin{align*}
^cD_\tau^q g(\tau) = \frac{1}{\Gamma(m-q)}\int_0^\tau (\tau-t)^{m-q-1} g^{(m)}(t)dt,\ 0<m-1\leq q<m,\ m\in\mathbb{N}.
\end{align*}
\end{definition}
\begin{remark}
 If g is $\mathcal{V}$-valued, the integral in Definitions 2.1 and 2.2 is taken in Bochner's sense.
\end{remark}
\begin{definition}\cite{Shu&Wang2012}
A closed linear operator $\mathscr{A}: D(\mathscr{A})\subset \mathcal{V} \rightarrow \mathcal{V}$ is called a sectorial operator of type $(\mathrm{M}^*,\uptheta,q,\Lambda)$ if there are constant $\Lambda\in \mathds{R},\ 0<\uptheta < \pi/2,\ \mathrm{M}^\ast\in \mathds{R}^+$ such that
\begin{enumerate}
\item The $q$-resolvent of $\mathscr{A}$ exists outside the sector $\Lambda+S_\uptheta= \{\Lambda +\lambda ^q :\ \lambda \in \mathbb{C},\ |Arg(-\lambda^q)|<\uptheta\}$,\
\item $\|\mathsf{R}(\lambda^q, \mathscr{A})\| = \|(\lambda^q-\mathscr{A})^{-1}\| \leq \frac{\mathrm{M}^\ast}{|\lambda^q-\Lambda|},\ \lambda^q \not\in \Lambda+S_\uptheta$.
\end{enumerate}
If $\mathscr{A}$ is the sectorial operator of type $(\mathrm{M}^\ast,\uptheta,q,\Lambda)$, then $\mathscr{A}$ generates a $q$-resolvent family $\{\mathcal{R}_q(\tau)\}_{\tau\geq 0}$, where $\mathcal{R}_q(\tau) =  \frac{1}{2\pi i}\int_{\mathbf{C}} e^{\lambda \tau} \mathsf{R}(\lambda^q, \mathscr{A}) d\lambda$, and $\mathbf{C}$ is a suitable path such that $\lambda^q \not\in \Lambda+S_\uptheta$.
\end{definition}
\begin{lemma}\cite{Shu&Shi2016}
Suppose that $\mathscr{A}$  is the sectorial operator of type $(\mathrm{M}^*,\uptheta,q,\Lambda)$, and $g$ satisfies the uniform Holder condition with the exponent $\beta \in (0,1]$. Then the unique solution of the Cauchy problem
\begin{align*}
^CD_\tau^q \xi(\tau)&= \mathscr{A}\xi(\tau) + \mathbb{J}_\tau^{2-q}g(\tau), \ 1<q<2,\ \tau \in \mathcal{I}\\
\xi(0)&= \xi_0 \in \mathcal{V},\ \xi^\prime(0)= \xi_1 \in \mathcal{V},
\end{align*}
is given by
\begin{align*}
\xi(\tau)= \mathcal{T}_q(\tau)\xi_0+\mathcal{S}_q(\tau)\xi_1+\int_0^\tau \mathcal{S}_q(\tau-t)g(t)dt, \  \tau\in\mathcal{I},
\end{align*}
where
\begin{align*}
\mathcal{T}_q(\tau) = \frac{1}{2\pi i}\int_\mathbf{C} e^{\lambda \tau} \lambda^{q-1}\mathsf{R}(\lambda^q, \mathscr{A})d\lambda;\ 
\mathcal{S}_q(\tau) = \frac{1}{2\pi i}\int_\mathbf{C} e^{\lambda \tau} \lambda^{q-2}\mathsf{R}(\lambda^q, \mathscr{A})d\lambda.
\end{align*}
\end{lemma}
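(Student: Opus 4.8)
The plan is to proceed by the Laplace transform, exactly as one does for the abstract fractional Cauchy problem. Writing $\hat{\xi}(\lambda)=\int_0^\infty e^{-\lambda\tau}\xi(\tau)\,d\tau$ and using that for $1<q<2$ the Caputo derivative transforms as $\mathcal{L}\{{}^CD_\tau^q\xi\}(\lambda)=\lambda^q\hat{\xi}(\lambda)-\lambda^{q-1}\xi(0)-\lambda^{q-2}\xi'(0)$, while $\mathcal{L}\{\mathbb{J}_\tau^{2-q}g\}(\lambda)=\lambda^{q-2}\hat{g}(\lambda)$, I would convert the Cauchy problem into the algebraic identity
\begin{align*}
\lambda^q\hat{\xi}(\lambda)-\lambda^{q-1}\xi_0-\lambda^{q-2}\xi_1=\mathscr{A}\hat{\xi}(\lambda)+\lambda^{q-2}\hat{g}(\lambda).
\end{align*}
Since $\mathscr{A}$ is sectorial of type $(\mathrm{M}^\ast,\uptheta,q,\Lambda)$, the operator $\lambda^q-\mathscr{A}$ is invertible for $\lambda^q\notin\Lambda+S_\uptheta$, so the identity can be solved to give
\begin{align*}
\hat{\xi}(\lambda)=\lambda^{q-1}\mathsf{R}(\lambda^q,\mathscr{A})\xi_0+\lambda^{q-2}\mathsf{R}(\lambda^q,\mathscr{A})\xi_1+\lambda^{q-2}\mathsf{R}(\lambda^q,\mathscr{A})\hat{g}(\lambda).
\end{align*}

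The second step is to identify each term on the right as a Laplace transform. By construction the contour integrals give $\mathcal{L}\{\mathcal{T}_q(\tau)\}(\lambda)=\lambda^{q-1}\mathsf{R}(\lambda^q,\mathscr{A})$ and $\mathcal{L}\{\mathcal{S}_q(\tau)\}(\lambda)=\lambda^{q-2}\mathsf{R}(\lambda^q,\mathscr{A})$, because the Bromwich representation $\frac{1}{2\pi i}\int_\mathbf{C}e^{\lambda\tau}(\cdot)\,d\lambda$ is precisely the inverse transform; the sectorial estimate $\|\mathsf{R}(\lambda^q,\mathscr{A})\|\le\mathrm{M}^\ast/|\lambda^q-\Lambda|$ guarantees absolute convergence of these contour integrals and legitimises deforming $\mathbf{C}$ into the standard Hankel path. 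The final term is a product of two transforms, so the convolution theorem converts it into $\int_0^\tau\mathcal{S}_q(\tau-t)g(t)\,dt$. Inverting termwise then yields the claimed formula
\begin{align*}
\xi(\tau)=\mathcal{T}_q(\tau)\xi_0+\mathcal{S}_q(\tau)\xi_1+\int_0^\tau\mathcal{S}_q(\tau-t)g(t)\,dt.
\end{align*}

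To confirm that this candidate is a genuine solution and to secure uniqueness, I would verify the initial data and the regularity directly. Differentiating the contour integrals (using the scalar Mittag--Leffler identities as a guide) one checks $\mathcal{T}_q(0)=I$, $\mathcal{T}_q'(0)=0$, $\mathcal{S}_q(0)=0$ and $\mathcal{S}_q'(0)=I$, so the homogeneous part reproduces $\xi(0)=\xi_0$ and $\xi'(0)=\xi_1$, while the convolution term and its first derivative vanish at $\tau=0$. Substituting the representation back and applying $\mathscr{A}$ under the integral (justified by the closedness of $\mathscr{A}$ together with the resolvent bound) recovers the equation. Uniqueness follows because the difference of two solutions satisfies the homogeneous problem with zero data, whose Laplace transform is identically zero.

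The step I expect to be the main obstacle is the regularity of the convolution term: showing that $\tau\mapsto\int_0^\tau\mathcal{S}_q(\tau-t)g(t)\,dt$ is smooth enough to carry the fractional derivative and to lie in $D(\mathscr{A})$ for $\tau>0$. This is exactly where the uniform H\"{o}lder hypothesis on $g$ is indispensable --- as in the classical analytic--semigroup theory, H\"{o}lder continuity compensates for the mild singularity of $\mathcal{S}_q$ near $t=\tau$ and yields the continuity of $\mathscr{A}$ applied to the convolution, so that the formal inversion produces a classical solution rather than merely a mild one. The remaining manipulations --- the Fubini-type interchanges and the differentiation under the contour integral --- are routine once the sectorial estimate is in hand.
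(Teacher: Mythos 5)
The paper states this lemma without proof, citing Shu and Shi \cite{Shu&Shi2016}, where the representation is derived by exactly the Laplace-transform argument you give: transform the Caputo derivative and the fractional integral, solve for $\hat{\xi}(\lambda)$ using the sectorial resolvent, and invert term by term with the convolution theorem, the H\"{o}lder condition on $g$ securing the regularity of the convolution term. Your proposal is correct and follows essentially the same route as the cited source.
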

\begin{definition}\cite{D.K1985,J&K1980}
The Hausdorff  measure of noncompactness, denoted by $\mu$, of a bounded set $B \neq \emptyset , \ B\subset\mathcal{V}$ is defined as
$\mu(B) = \inf \{\epsilon > 0 :$ B has a finite $\epsilon$--net in$\ \mathcal{V} \}$.
\end{definition}

Some fundamental properties of the Hausdorff measure of
noncompactness are listed below:
\begin{lemma}\label{lemma2.2}(\cite{Ji&Li2012,D.K1985,J&K1980}) 
For the proper bounded sets $U,U_1,U_2 \subset\mathcal{V}$, we have
\begin{enumerate}
\item  $U_1 \subseteq U_2 \  \Rightarrow \mu(U_1) \leq \mu(U_2)$.\
\item $\mu(U)= \mu(\overline{U})=\mu(\textbf{co}(U))$, where $\overline{U}$ and $\textbf{co}(U)$ are closure and convex hull of $U$, respectively;\
\item  $\overline{U}$ is compact $\Leftrightarrow$ $\mu(U) = 0$;\
\item $\mu(U_1 +U_2) \leq \mu(U_1)+\mu(U_2)$;\
\item $\mu(rU) \leq |r|\ \mu(U)$, $r\in \mathds{R}$;\
\item  If $U \subset \mathtt{PC}(\mathcal{I},\mathcal{V})$ is bounded and equicontinuous, then $t \mapsto \mu(U(t))$ is piecewise continuous on $\mathcal{I}$ and $\mu(U) =\  \mathop{\sup}\limits_{s\in \mathcal{I}} \mu(U(s))$. Moreover, for every $\tau\in \mathcal{I},\ \mu\Big(\int_0^\tau U(s)ds\Big)\leq \int_0^\tau \mu(U(s))ds$,\\
where $U(s) = \{\xi(s) : \xi \in U\}$ and $\int_0^\tau U(s)ds = \Big\{\int_0^\tau \xi(s)ds:\ \xi\in U \Big\};$
\item Let $\{\xi_n\}_{n=1}^\infty$ be a sequence of Bochner's integrable functions from $\mathcal{I}$ into $\mathcal{V}$ such that \\$\|\xi_n(\tau)\| \leq \ \hat q(\tau)$ for each $n \geq 1$ and almost all $\tau\in \mathcal{I},$ where $\hat q \in L^1(\mathcal{I},\mathds{R}^+ )$, then \\$\hat{g}(\cdot) = \mu \{\xi_n(\cdot) : n \geq 1\}\in L^1(\mathcal{I},\mathds{R}^+ )$ satisfies
\begin{center}
$\mu (\{\int_0^\tau \{\xi_n(t)\}_{n\geq 1}dt\}) \leq \ 2\int_0^\tau \hat{g}(t)dt$.
\end{center}
\end{enumerate}
\end{lemma}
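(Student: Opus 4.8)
The plan is to verify each assertion directly from the definition of $\mu$ via finite $\epsilon$-nets, since items (1)--(5) are purely combinatorial manipulations of covers, whereas (6) and (7) require completeness of $\mathcal{V}$ and the structure of the Bochner integral. For (1), any finite $\epsilon$-net of $U_2$ also covers $U_1$ when $U_1\subseteq U_2$, so every $\epsilon$ admissible for $U_2$ is admissible for $U_1$ and the infimum defining $\mu(U_1)$ cannot exceed that of $\mu(U_2)$. For (4), if $\{x_i\}$ is an $\epsilon_1$-net of $U_1$ and $\{y_j\}$ an $\epsilon_2$-net of $U_2$, then $\{x_i+y_j\}$ is an $(\epsilon_1+\epsilon_2)$-net of $U_1+U_2$ by the triangle inequality; taking infima yields $\mu(U_1+U_2)\leq\mu(U_1)+\mu(U_2)$. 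For (5), scaling an $\epsilon$-net of $U$ by $r$ produces an $|r|\epsilon$-net of $rU$. The identity $\mu(U)=\mu(\overline{U})$ in (2) follows since an $\epsilon$-net of $U$ yields, after replacing open balls by closed ones, an $\epsilon'$-net of $\overline{U}$ for every $\epsilon'>\epsilon$, so the two infima agree.

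Next I would treat the items that use more than the triangle inequality. Property (3) is a restatement of total boundedness: $\mu(U)=0$ means $U$ admits a finite $\epsilon$-net for each $\epsilon>0$, i.e.\ $U$ is totally bounded; since $\mathcal{V}$ is complete, $\overline{U}$ is then complete and totally bounded, hence compact, and the converse is immediate from (2). The convex-hull part of (2), $\mu(U)=\mu(\textbf{co}(U))$, is the least routine of the algebraic claims: the inequality $\mu(U)\leq\mu(\textbf{co}(U))$ is monotonicity, while the reverse rests on Mazur's observation that the convex hull of a totally bounded set is totally bounded, proved by writing a convex combination as a finite-dimensional part over the net centres plus a uniformly small remainder.

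The main obstacle lies in (6) and especially (7). For (6), boundedness together with equicontinuity forces $t\mapsto\mu(U(t))$ to be piecewise continuous, so its supremum is attained and pointwise bounds upgrade to uniform ones; partitioning $\mathcal{I}$, estimating on each subinterval, and invoking subadditivity (4) in the limit then yields both $\mu(U)=\sup_{s\in\mathcal{I}}\mu(U(s))$ and $\mu\big(\int_0^\tau U(s)\,ds\big)\leq\int_0^\tau\mu(U(s))\,ds$. Property (7) is the genuinely delicate inequality. I would first prove it for sequences of simple functions subordinate to a common partition of $\mathcal{I}$, where the integral is a finite sum $\sum_i|T_i|\,\xi_n^i$ and subadditivity (4) together with homogeneity (5) give the bound \emph{without} the factor $2$; the general strongly measurable, $\hat q$-dominated case is then recovered by approximation, using absolute continuity of $t\mapsto\int_0^t\hat q$ to control the error. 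The factor $2$ is a known and essentially optimal feature of the Hausdorff measure that surfaces precisely in this approximation step. Since all seven properties are classical, I would ultimately cite \cite{Ji&Li2012,D.K1985,J&K1980} for the remaining technical bookkeeping rather than reproduce it in full.
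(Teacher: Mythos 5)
The paper offers no proof of this lemma at all — it is stated as a known result with citations to \cite{Ji&Li2012,D.K1985,J&K1980} — and your outline is a correct rendition of the classical arguments in exactly those sources, ending by deferring the technical details to the same references. The only slight imprecision is in item (7): in the standard (Heinz/M\"onch) argument the factor $2$ originates from comparing the Hausdorff measure relative to the separable closed subspace spanned by $\{\xi_n\}$ with that relative to all of $\mathcal{V}$, rather than from the simple-function approximation step itself, but this does not affect the validity of your sketch.
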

In case of stochastic integral, we use the following result:
\begin{lemma} \label{lemma2.3}\cite{DengShu&Mao2018}
 If $H \subset C(\mathcal{I};\ \mathbb{L}_\mathtt{Q} (\mathcal{V},\mathcal{K}))$ and $\omega$ is the standard Winer process, then 
\begin{center}
$\mu(\int_0^\tau H(t)d\omega(t)) \leq \sqrt{a} \sqrt{Tr(\mathtt{Q})}\  \mu (H(t))$,
 \end{center}
where $\int_0^\tau H(t) d\omega(t) = \{ \int_0^\tau v(t)d\omega(t)  : \ v \in H, \ \tau \in \mathcal{I}\}$.
 \end{lemma}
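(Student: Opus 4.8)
The plan is to view the stochastic integral as a bounded (indeed Lipschitz) operator and then push the Lipschitz estimate through the $\epsilon$-net definition of $\mu$. Concretely, regard $\Phi(v):=\int_0^\tau v(t)\,d\omega(t)$ as a map from $H\subset C(\mathcal{I};\mathbb{L}_{\mathtt{Q}}(\mathcal{K},\mathcal{V}))$ into $\mathbb{L}_{\mathscr{F}_\tau}^2(\Omega,\mathcal{V})$. If I can show $\Phi$ is Lipschitz with constant $\sqrt{a}\sqrt{Tr(\mathtt{Q})}$, then the bound on $\mu$ follows at once, because the Hausdorff measure of noncompactness contracts under a Lipschitz map by exactly the Lipschitz constant.

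First I would pin down the Lipschitz constant via the It\^o isometry for the $\mathtt{Q}$-Wiener process. For $v_1,v_2\in H$,
\[
\mathbb{E}\Big\|\int_0^\tau \big(v_1(t)-v_2(t)\big)\,d\omega(t)\Big\|^2=\mathbb{E}\int_0^\tau \|v_1(t)-v_2(t)\|_{\mathtt{Q}}^2\,dt.
\]
Using the relation $\|\Theta\|_{\mathtt{Q}}^2=Tr(\Theta\mathtt{Q}\Theta^{\ast})=\sum_{j=1}^{\infty}\nu_j\|\Theta e_j\|^2\le \|\Theta\|^2\sum_{j=1}^{\infty}\nu_j=Tr(\mathtt{Q})\,\|\Theta\|^2$, which is immediate from the definitions of $\|\cdot\|_{\mathtt{Q}}$ and $Tr(\mathtt{Q})$ recorded in this section, I would estimate the right-hand side by
\[
Tr(\mathtt{Q})\,\mathbb{E}\int_0^\tau \|v_1(t)-v_2(t)\|^2\,dt\le a\,Tr(\mathtt{Q})\,\sup_{t\in\mathcal{I}}\mathbb{E}\|v_1(t)-v_2(t)\|^2 .
\]
Taking square roots gives $\|\Phi(v_1)-\Phi(v_2)\|\le \sqrt{a}\sqrt{Tr(\mathtt{Q})}\,\|v_1-v_2\|$, so $\Phi$ is Lipschitz with the asserted constant.

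Finally I would transfer this to $\mu$. Put $L=\sqrt{a}\sqrt{Tr(\mathtt{Q})}$ and let $\epsilon>0$ be such that $H$ admits a finite $\epsilon$-net $\{v_1,\dots,v_n\}$. For every $v\in H$ pick $v_i$ with $\|v-v_i\|<\epsilon$; the Lipschitz bound then yields $\|\Phi(v)-\Phi(v_i)\|\le L\epsilon$, so $\{\Phi(v_1),\dots,\Phi(v_n)\}$ is a finite $L\epsilon$-net for $\Phi(H)=\int_0^\tau H(t)\,d\omega(t)$. Passing to the infimum over admissible $\epsilon$ gives $\mu\big(\int_0^\tau H(t)\,d\omega(t)\big)\le L\,\mu(H)$, which is precisely the claim. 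I expect the main obstacle to be bookkeeping rather than analysis: one must ensure the integrands are adapted and square integrable so that the It\^o isometry is legitimate, and, crucially, that the measure of noncompactness on the integrand side is taken in the operator norm, since it is exactly the passage from $\|\cdot\|_{\mathtt{Q}}$ to $\|\cdot\|$ that produces the factor $\sqrt{Tr(\mathtt{Q})}$ (otherwise one would obtain only $\sqrt{a}$).
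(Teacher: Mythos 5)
The paper states this lemma without proof, citing Deng, Shu and Mao \cite{DengShu&Mao2018}, so there is no in-paper argument to compare against; your proposal is the standard route and matches the one used in that reference: It\^o's isometry, the bound $\|\Theta\|_{\mathtt{Q}}^2\le Tr(\mathtt{Q})\,\|\Theta\|^2$, and the fact that a Lipschitz map contracts the Hausdorff measure of noncompactness by its Lipschitz constant. Your closing remark correctly pins down the only delicate point, namely that the factor $\sqrt{Tr(\mathtt{Q})}$ arises precisely from measuring the integrands in the operator norm rather than in $\|\cdot\|_{\mathtt{Q}}$; the argument is sound as written (with $\tau$ held fixed, which is how the lemma is actually invoked in Step~III of Theorem~\ref{thm3.1}).
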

 \begin{lemma}\label{lemma2.4} \cite{Monch1980}
Suppose $\mathbf{B}\subset\mathcal{V}$ is convex, closed and $\ 0\in \mathbf{B}$. If the map $\Gamma : \mathbf{B} \rightarrow \mathcal{V}$
is continuous and $\Gamma$ satisfies the property:
\begin{center}
$\mathbf{G} \subset \mathbf{B}, \ \mathbf{G}$ is countable, $\mathbf{G}\subset \overline{\textbf{co}}\big(\{0\} \bigcup \Gamma (\mathbf{G})\big)
\Rightarrow \ \overline{\mathbf{G}}$ is compact,
\end{center}
then $\Gamma$ admits a fixed point in $\mathbf{B}$.
\end{lemma}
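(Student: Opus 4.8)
The statement is the classical Mönch fixed point principle, and my plan is to manufacture a nonempty compact convex set that $\Gamma$ maps into itself and then invoke Schauder's fixed point theorem. Throughout I would take the distinguished point to be $x_0=0\in\mathbf{B}$ and work under the (implicitly needed) self-map assumption $\Gamma(\mathbf{B})\subseteq\mathbf{B}$, so that the family below is nonempty. First I would let $\mathcal{C}$ be the collection of all closed convex $C$ with $0\in C\subseteq\mathbf{B}$ and $\Gamma(C)\subseteq C$; since $\mathbf{B}\in\mathcal{C}$, the set $H:=\bigcap_{C\in\mathcal{C}}C$ is closed, convex, contains $0$, and satisfies $\Gamma(H)\subseteq H$, so $H$ is the smallest element of $\mathcal{C}$. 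Introducing the iterates $H_1=\{0\}$ and $H_{n}=\overline{\textbf{co}}\big(\{0\}\cup\Gamma(H_{n-1})\big)$, I would check by induction that the $H_n$ are increasing and contained in $H$, and that $\overline{\bigcup_n H_n}$ is again closed, convex, contains $0$, and is invariant (using continuity of $\Gamma$ to get $\Gamma(\overline{\bigcup_n H_n})\subseteq\overline{\bigcup_n H_n}$); minimality of $H$ then forces $H=\overline{\bigcup_n H_n}$.

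The crux of the proof is to replace $H$, which need not be separable a priori, by a genuinely countable set to which the hypothesis of the lemma applies. I would construct countable sets $M_n$ with $M_n$ dense in $H_n$: set $M_1=\{0\}$, and given $M_n$ dense in $H_n$, use continuity of $\Gamma$ and density to obtain $H_{n+1}=\overline{\textbf{co}}(\{0\}\cup\Gamma(H_n))=\overline{\textbf{co}}(\{0\}\cup\Gamma(M_n))$, which is the closed convex hull of a countable set and hence separable; choose a countable dense $M_{n+1}\supseteq M_n$ inside it. Putting $M=\bigcup_n M_n$, I would then verify that $M$ is countable, that $\overline{M}=\overline{\bigcup_n H_n}=H$, and that for $v\in M_{n+1}\subseteq\overline{\textbf{co}}(\{0\}\cup\Gamma(M_n))\subseteq\overline{\textbf{co}}(\{0\}\cup\Gamma(M))$ one gets the self-referential inclusion $M\subseteq\overline{\textbf{co}}(\{0\}\cup\Gamma(M))$.

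Finally, I would apply the hypothesis of the lemma with $\mathbf{G}=M$: since $M$ is countable and satisfies $M\subseteq\overline{\textbf{co}}(\{0\}\cup\Gamma(M))$, the conclusion $\overline{M}$ compact holds, and because $\overline{M}=H$ this shows that $H$ is compact. Thus $H$ is a nonempty compact convex set with $\Gamma(H)\subseteq H$ and $\Gamma|_H$ continuous, so Schauder's fixed point theorem yields a fixed point of $\Gamma$ in $H\subseteq\mathbf{B}$, which is exactly the desired conclusion.

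I expect the main obstacle to be the reduction in the second step: bridging from the possibly non-separable minimal invariant set $H$ to a countable set $M$ that is simultaneously dense in $H$ \emph{and} closed under the inclusion $M\subseteq\overline{\textbf{co}}(\{0\}\cup\Gamma(M))$, since the compactness hypothesis is available only for countable sets. The two identities $H_{n+1}=\overline{\textbf{co}}(\{0\}\cup\Gamma(M_n))$ and $\overline{M}=H$, both relying on continuity of $\Gamma$ together with the density of $M_n$ in $H_n$, are the delicate points; once they are secured, the compactness of $H$ and the final appeal to Schauder's theorem are routine.
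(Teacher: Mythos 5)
The paper does not prove this lemma; it is quoted verbatim from M\"onch's 1980 article, so there is no in-paper argument to compare against. Your proposal is a correct and complete rendition of the standard proof of M\"onch's fixed point principle: the increasing iterates $H_n$, the identification of their closed union with the minimal invariant closed convex set, the countable reduction via dense subsets $M_n$ (using continuity to get $\overline{\textbf{co}}(\{0\}\cup\Gamma(H_n))=\overline{\textbf{co}}(\{0\}\cup\Gamma(M_n))$ and hence separability), the application of the M\"onch condition to $M$, and the final appeal to Schauder. You are also right that the statement as printed (with codomain $\mathcal{V}$ rather than $\mathbf{B}$) silently requires the self-map hypothesis $\Gamma(\mathbf{B})\subseteq\mathbf{B}$, without which the family $\mathcal{C}$ could fail to be invariant and the conclusion is false; that is indeed how the theorem is used in Step~I of Theorem~\ref{thm3.1}, where $\Upsilon(\overline{\mathtt{B}}_\alpha)\subset\overline{\mathtt{B}}_\alpha$ is established first.
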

In virtue of Lemma \ref{lemma2.2}, mild solution of system (\ref{mainequation}) is defined as
\begin{definition}
 An $\mathscr{F}_\tau$-adapted stochastic process $\xi : (-\infty, a] \rightarrow \mathcal{V}$ is called a mild solution of (\ref{mainequation}) if 
\begin{enumerate}              
\item $\xi_\tau$ is measurable and adapted to $\mathscr{F}_\tau,\ \tau \geq 0$,\
\item $\xi(\tau) \in \mathcal{V}$ has $c\acute{a}dl\acute{a}g$ paths on $\tau\in\mathcal{I}$ a.s.,\
\item $\ \xi_0= \uppsi, \ \xi_{\varrho(s,\xi_s)}\in \mathfrak{D}$; $s\in \mathcal{I}$ with $\xi_{1}  \in  \mathbb{L}_{\mathscr{F}_0}^2(\Omega, \mathcal{V}),\ \xi|_{\mathcal{I}} \in \mathtt{PC}(\mathcal{I},\mathcal{V})$ and following integral equation hold:
\begin{equation}\label{mildsolution}
\xi(\tau)=
\begin{cases}
 \mathcal{T}_q(\tau)[ \uppsi(0)- k_1(\xi) - b(0,\uppsi)]+\mathcal{S}_q(\tau)[\xi_1-k_2(\xi)] \\+ b(\tau,\xi_{(\varrho(\tau,\xi_\tau)})+ \int_0^\tau \int_{\gamma} \mathcal{S}_q (\tau-s) f(s,\vartheta,\xi_{\varrho(s,\xi_s)})\widetilde{Z}(ds,d\vartheta)\\+ \int_0^\tau \mathcal{T}_q (\tau-s) (\int_{-\infty}^s h(z, \xi_{\varrho(z,\xi_{z})})d\omega(z))ds,\hspace{3.1cm} \tau \in [0,r_1];\\
 l_i(\xi_{r_i}) + m_i(\tau, \xi_{\varrho(\tau,\xi_\tau)}),\hspace{6cm} \tau\in \mathop{\cup}\limits_{i=1}^{\mathcal{N}}(r_i,s_i]; \\
 \mathcal{T}_q(\tau-s_i)[ l_i(\xi_{s_i}) + m_i(s_i, \xi_{\varrho(s_i,\xi_{s_i})}) - b(s_i,\xi_{\varrho(s_i,\xi_{s_i})})]\\+ b(\tau,\xi_{(\varrho(\tau,\xi_\tau)})
+ \int_{s_i}^\tau \int_{\gamma} \mathcal{S}_q (\tau-s) f(s,\vartheta,\xi_{\varrho(s,\xi_s)}) \widetilde{Z}(ds,d\vartheta) \\+ \int_{s_i}^\tau \mathcal{T}_q (\tau-s) (\int_{-\infty}^s h(z, \xi_{\varrho(z,\xi_{z})})d\omega(z))ds,\hspace{2.4cm} \tau\in \mathop{\cup}\limits_{i=1}^{\mathcal{N}}(s_i,r_{i+1}].
\end{cases}
\end{equation}
\end{enumerate}
\end{definition}
\section{Existence of solution}\label{Sect.3}
 For further development, we construct the following hypothesis:
\begin{itemize}
\item[$(\textbf{S}_\uppsi):$] Let $\mathsf{R}(\varrho^-)=\{\varrho(\tau,\varphi)\leq 0:(\tau,\varphi)\in \mathcal{I}\times\mathfrak{D}\}.$ The map $\tau \mapsto \uppsi_\tau$ is continuous from $\mathsf{R}(\varrho^-)$ into $\mathfrak{D}$ and there is a bounded continuous map $J^{\uppsi}:\mathsf{R}(\varrho^-)\rightarrow \mathds{R}^+$ such that $\|\uppsi_\tau\|_\mathfrak{D}\leq J^{\uppsi}(\tau)\|\uppsi\|_\mathfrak{D}$, for every $\tau\in \mathsf{R}(\varrho^-)$. 
\item [$(\textbf{S}_1):$] The linear operators $\mathcal{T}_q(\cdot)$ and $\mathcal{S}_q(\cdot)$ are strongly continuous, and there is $\mathrm{M}>0$ such that
\begin{align*}
\mathop{\sup}\limits_{\tau\in \mathcal{I}}\|\mathcal{T}_q(\tau)\| \ \vee \ \mathop{\sup}\limits_{\tau\in \mathcal{I}}\|\mathcal{S}_q(\tau)\|\ \leq \mathrm{M}.
\end{align*}
\item [$(\textbf{S}_2):$] (i) The mappings $k_1, k_2 :\mathfrak{D} \rightarrow \mathcal{V}$ are continuous and there are constants $L_{k_1}>0$ and $L_{k_2}>0$ such that
\begin{align*}
\mathbb{E}\|k_1(\xi)\|^2\ \leq \ L_{k_1}(1+\|\xi\|_{\mathfrak{D}}^2) ~\mbox{and}~
\mathbb{E}\|k_2(\xi)\|^2\ \leq \ L_{k_2}(1+ \|\xi\|_{\mathfrak{D}}^2);~\xi\in\mathfrak{D}.
\end{align*}
\item[(ii)] There are constants $l_1^*>0$ and $l_2^*>0$ such that for bounded sets $D_1,D_2 \subset \mathfrak{D}$,
\begin{align*}
\mu(k_{1}(D_1))\ \leq\ l_1^* \mathop{\sup}\limits_{\theta \in \mathcal{I}_0} \mu (D_1(\theta)),\ \text{and} \   
\mu(k_{2}(D_2))\ \leq\ l_2^* \mathop{\sup}\limits_{\theta \in \mathcal{I}_0} \mu (D_2(\theta)).
\end{align*}
\item[$(\textbf{S}_3):$]  The mapping $b: \mathcal{I} \times \mathfrak{D} \rightarrow \mathcal{V}$ satisfies following:
\item[(i)] Let $\xi:(-\infty,a]\rightarrow\mathcal{V}$ be a map such that $\xi_0=\uppsi$ and $\xi|_\mathcal{I} \in \mathtt{PC}$. The map $\tau \mapsto b(s,\xi_\tau)$ is continuous on $\mathsf{R}(\varrho^-)\cup \mathcal{I}$ for every $s\in \mathcal{I}$ and $b(\tau,\cdot):\mathfrak{D}\rightarrow \mathcal{V}$ is continuous for every $\tau\in \mathcal{I}$. 
\item[(ii)] There exists $M_b>0$ such that
\begin{align*}
\mathbb{E}\|b(\tau,\xi)\|^2\ \leq \ M_b (1+\|\xi\|_{\mathfrak{D}}^2), \ \ \ (\tau,\xi) \in \mathcal{I}\times\mathfrak{D}
\end{align*}
\item[(iii)] There is a function $l_b(\cdot) \in L^1(\mathcal{I},\mathds{R}^+)$ such that for any bounded set $D_3 \subset \mathfrak{D}$,
\begin{align*}
\mu(b(\tau,D_3)) \leq l_b(\tau)\mathop{\sup}\limits_{\theta \in \mathcal{I}_0} \mu (D_3(\theta)),\ \text{and} \ l_b^* = \mathop{\sup}\limits_{\tau \in \mathcal{I}}l_b(\tau).\
\end{align*}
\item[$(\textbf{S}_4):$]For $m_i : (r_i,s_i]\times \mathfrak{D} \rightarrow \mathcal{V}$ the following hold: 
\item[(i)] The map $\tau\mapsto m_i(s,\xi_\tau),\ 1\leq i \leq \mathcal{N}$, are continuous on $\mathsf{R}(\varrho^-)\cup \mathcal{I}$ for every $s\in \mathcal{I}$, and $m_i(\tau,\cdot):\mathfrak{D}\rightarrow \mathcal{V}$ is continuous for every $\tau\in \mathcal{I}$.
\item[(ii)] There are constants $M_i>0, \  1\leq i \leq \mathcal{N}$, such that
\begin{align*}
 \mathbb{E}\|m_i(\tau,\xi)\|^2\ \leq \ M_i (1+\|\xi\|_{\mathfrak{D}}^2), \ \  (\tau,\xi)\in (r_i,s_i]\times \mathfrak{D}.
 \end{align*}
\item[(iii)] For any bounded set $D_4 \subset \mathfrak{D}$ there exist $l_{m_i}(\cdot) \in L^1(\mathcal{I},\mathds{R}^+)$  to ensure that
\begin{align*}
\mu(m_{i}(\tau,D_4))\ \leq\ l_{m_i}(\tau)\mathop{\sup}\limits_{\theta \in \mathcal{I}_0} \mu (D_4(\theta)),\ \text{and}\ l_{m_i}^*= \mathop{\sup}\limits_{\tau\in\mathcal{I}}l_{m_i}(\tau).
\end{align*}
\item[$(\textbf{S}_5):$] (i)  The functions $l_i : \mathfrak{D} \rightarrow \mathcal{V}$ are continuous and there are continuous non-decreasing functions $\mathcal{A}_{l_i}:\mathds{R}^+\cup\{0\} \rightarrow \mathds{R}^+,\  1\leq i \leq \mathcal{N}$, such that
\begin{align*}
\mathbb{E}\|l_i(\xi)\|^2\ \leq \ \mathcal{A}_{l_i} (\|\xi\|_{\mathfrak{D}}^2), \ \  \xi\in \mathfrak{D},\ \text{and}\ 
\mathop{\liminf}\limits_{r\rightarrow \infty} \frac{ \mathcal{A}_{l_i}(r)}{r}\ =\ \lambda_i \ < \infty.
\end{align*}
\item[(ii)] There exist $L_i>0,\  1\leq i \leq \mathcal{N}$, such that for any bounded set $D_5 \subset \mathfrak{D}$, 
\begin{align*}
\mu(l_{i}(D_5))\ \leq\ L_i \mathop{\sup}\limits_{\theta \in \mathcal{I}_0} \mu (D_5(\theta)).
\end{align*}
\item[$(\textbf{S}_6):$] The function $h: \mathcal{I} \times \mathfrak{D} \rightarrow \mathbb{L}_\mathtt{Q}(\mathcal{K},\mathcal{V})$ satisfies the following:
\item[(i)] The mapping $h(\tau,\cdot): \mathfrak{D} \rightarrow \mathbb{L}_\mathtt{Q}(\mathcal{K},\mathcal{V})$ is continuous for a.e. $\tau\in \mathcal{I}$, and  $h(\cdot,\xi): \mathcal{I}  \rightarrow \mathbb{L}_\mathtt{Q}(\mathcal{K},\mathcal{V})$ is strongly measurable for every $\xi \in \mathfrak{D}$.\
\item[(ii)] There exists $\mathfrak{m}(\cdot) \in L^1(\mathcal{I},\mathds{R}^+)$ and a continuous non-decreasing function \\$\mathcal{A}_h :\mathds{R}^+\cup \{0\} \rightarrow \mathds{R}^+$ such that for $(z,\xi) \in \mathcal{I} \times \mathfrak{D}$, 
\begin{align*}
\int_0^\tau \mathbb{E}\|h(z,\xi)\|_\mathtt{Q}^2 dz \ \leq \ \mathfrak{m}(\tau) \mathcal{A}_h (\|\xi\|_{\mathfrak{D}}^2), \ \ 
\mathop{\liminf}\limits_{r\rightarrow \infty} \frac{ \mathcal{A}_h(r)}{r}\ =\ \lambda_h \ < \infty.
\end{align*}
\item[(iii)] For each $\xi \in \mathfrak{D}, \ k(\tau)= \mathop{\lim}\limits_{e\rightarrow \infty} \int_{-e}^0 h(\tau,\xi)d\omega(z)$ exists, $k(\cdot)$ is continuous, and $\mathbb{E}\|k(\cdot)\|^2 \leq M_h$, for some  constant $M_h>0$.
\item[(iv)] There exists $l_h(\cdot) \in L^1(\mathcal{I},\mathds{R}^+)$ such that
$\mu(h(\tau,D_6))\ \leq\ l_h(\tau)\mathop{\sup}\limits_{\theta \in \mathcal{I}_0} \mu (D_6(\theta))$, where $D_6 \subset \mathfrak{D}$ is bounded.
\item[$(\textbf{S}_7):$](i) The map $t\mapsto f(\tau,\cdot,\xi_{\varrho(\tau,\xi_\tau)})$ is Borel measurable on $\mathcal{I}$, and $\tau\mapsto f(s,\vartheta,\xi_\tau)$ is continuous on $\mathsf{R}(\varrho^-)\cup \mathcal{I}$ for every $s\in \mathcal{I}$.\
\item[(ii)] There exist $\mathfrak{n}(\cdot) \in L^1(\mathcal{I},\mathds{R}^+)$, and a continuous non-decreasing function $\mathcal{A}_f :\mathds{R}^+\cup\{0\} \rightarrow \mathds{R^+}$ to ensure that 
$ \int_\gamma \mathbb{E}\|f(\tau,\vartheta,\xi)\|^2 \varkappa(d\vartheta) \ \leq \ \mathfrak{n}(\tau) \mathcal{A}_f (\|\xi\|_{\mathfrak{D}}^2), \ \ 
\mathop{\liminf}\limits_{r\rightarrow \infty} \frac{ \mathcal{A}_f(r)}{r}\ =\ \lambda_f \ < \infty$.
\item[(iii)] There is $l_f(\cdot,\cdot)\in L^1(\mathcal{I}\times \gamma,\mathds{R}^+)$ such that
$\mu(f(\tau,\vartheta,D_7))\ \leq\ l_f(\tau,\vartheta)\mathop{\sup}\limits_{\theta \in \mathcal{I}_0} \mu (D_7(\theta)),$ where $D_7 \subset \mathfrak{D}$ is  bounded.\\
For convenience, let $\mho=\mathop{\sup}\limits_{\tau \in \mathcal{I},\vartheta \in \gamma} \int_0^\tau \int_\gamma l_f(s,\vartheta)\widetilde{Z}(ds,d\vartheta)<\infty$.
\end{itemize}
To deal with term containing SDD, we apply the following:
\begin{lemma}\label{lemma3.1}(Compare with \cite{Hernandez&Ladeira})
Let $\xi : (-\infty, a] \rightarrow \mathcal{V}$  be an $\mathscr{F}_\tau$-adapted measurable process such that $\xi_0 = \uppsi(\tau) \in \mathbb{L}_{\mathscr{F}_0}^2(\Omega, \mathfrak{D})$
and $\xi|_{\mathcal{I}} \in \mathtt{PC}(\mathcal{I},\mathcal{V})$.
 If $(S_\uppsi)$ holds, then

$\|\xi_\tau\|_{\mathfrak{D}} \leq \ N_2^{\star}\sup \{ \mathbb{E}\|\xi(\tau)\|: \ \tau \in [0,max\{0,s\}],\  s \in \mathsf{R}(\varrho^-)\cup \mathcal{I}\}+(N_3^{\star} + J^{\star})\|\uppsi\|_{\mathfrak{D}}$,\\
where $J^{\star} = \mathop{\sup}\limits_{s\in \mathsf{R}(\varrho^-)} \ J^\uppsi (s)$.
\end{lemma}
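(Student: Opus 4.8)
The plan is to reduce the estimate to the two regimes in which the delayed argument $s$ can lie, namely $s\in\mathsf{R}(\varrho^-)$ (where $s\le 0$) and $s\in\mathcal I$ (where $s\ge 0$), and to treat each regime with the tool tailored to it: the phase space axiom consequence, Lemma \ref{lemma2.1}, on the positive part, and the hypothesis $(\textbf{S}_\uppsi)$ on the nonpositive part. The object to be controlled is $\|\xi_s\|_{\mathfrak D}$ uniformly over $s\in\mathsf R(\varrho^-)\cup\mathcal I$, which is exactly the expression that appears once one substitutes $s=\varrho(\tau,\xi_\tau)$ into the history terms of the mild solution \eqref{mildsolution}; the stated inequality is the uniform majorant one obtains after merging the two cases.

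First I would fix $s\in\mathcal I$, so that $\max\{0,s\}=s$. Since $\xi|_{\mathcal I}\in\mathtt{PC}(\mathcal I,\mathcal V)$ and $\xi_0=\uppsi$, Lemma \ref{lemma2.1} applies directly and yields
\[
\|\xi_s\|_{\mathfrak D}\ \le\ N_2^{\star}\sup\{\mathbb E\|\xi(t)\|:\ t\in[0,s]\}+N_3^{\star}\|\uppsi\|_{\mathfrak D}.
\]
Because $J^{\star}\ge 0$, the right-hand side is dominated by $N_2^{\star}\sup\{\mathbb E\|\xi(t)\|:t\in[0,\max\{0,s\}]\}+(N_3^{\star}+J^{\star})\|\uppsi\|_{\mathfrak D}$, which is the claimed inequality in this regime.

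Next I would take $s\in\mathsf R(\varrho^-)$, so $s\le 0$ and $\max\{0,s\}=0$. The key observation is that on $\mathcal I_0=(-\infty,0]$ one has $\xi=\uppsi$, since $\xi_0=\uppsi$; hence for every $\varpi\le 0$ the segment satisfies $\xi_s(\varpi)=\xi(s+\varpi)=\uppsi(s+\varpi)=\uppsi_s(\varpi)$, i.e. $\xi_s=\uppsi_s$ in $\mathfrak D$. Invoking $(\textbf{S}_\uppsi)$ then gives
\[
\|\xi_s\|_{\mathfrak D}=\|\uppsi_s\|_{\mathfrak D}\ \le\ J^{\uppsi}(s)\|\uppsi\|_{\mathfrak D}\ \le\ J^{\star}\|\uppsi\|_{\mathfrak D}.
\]
As the supremum term in the target inequality is nonnegative and $J^{\star}\le N_3^{\star}+J^{\star}$, this case is also subsumed by the stated bound.

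Combining the two regimes produces a single inequality valid for all $s\in\mathsf R(\varrho^-)\cup\mathcal I$, which is the assertion. I expect the only delicate point to be the identification $\xi_s=\uppsi_s$ for $s\le 0$: it rests on the initial datum forcing $\xi$ to coincide with $\uppsi$ on $\mathcal I_0$ together with the definition of the segment map, and it is precisely what lets $(\textbf{S}_\uppsi)$ stand in for the phase space axiom whenever the (state-dependent) delay pushes the argument below zero. Everything else is the routine bookkeeping of recording the nonnegativity of $J^{\star}$ and of the supremum term so that the two case-wise estimates collapse into the single uniform majorant.
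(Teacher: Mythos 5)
Your proof is correct and is precisely the standard two-case argument (phase space axiom/Lemma \ref{lemma2.1} for $s\in\mathcal{I}$, hypothesis $(\textbf{S}_\uppsi)$ together with the identification $\xi_s=\uppsi_s$ for $s\in\mathsf{R}(\varrho^-)$) that the paper implicitly relies on by citing \cite{Hernandez&Ladeira} without writing out a proof. Nothing is missing; the identification $\xi_s=\uppsi_s$ for $s\le 0$, which you flag as the delicate point, is exactly the step that justifies invoking $(\textbf{S}_\uppsi)$, and your bookkeeping with the nonnegativity of $J^{\star}$ and of the supremum term correctly merges the two cases into the stated uniform bound.
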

Consider the space $\mathfrak{D}_a = \{ \xi:(-\infty,a] \rightarrow \mathcal{V}:\ \xi_0 \in \mathfrak{D},\ \xi_1\in\mathcal{V},\  \xi|_{\mathcal{I}}\in \mathtt{PC}\big(\mathcal{I},\mathcal{V}\big),\ \mathop{\sup}\limits_{\tau\in \mathcal{I}}\mathbb{E}\|\xi(\tau)\|^2 < \infty \}$ with the semi-norm
$\|\xi\|_a= {\|\xi_0 \|}_\mathfrak{D}+\big(\mathop{\sup}\limits_{\tau\in \mathcal{I}}\mathbb{E}\|\xi(\tau)\|^2\big)^{1/2}.$\\
Before proving the existence result, we set
\begin{align*}
\triangle_1 = \mathop{\max}\limits_{1\leq i \leq \mathcal{N}} \Big[&5\mathrm{M}^2\{8N_1 [N_2^\star]^2(L_{k_1}+L_{k_2})+3\lambda_i+4[N_2^\star]^2(3M_i+3M_b)\\&+a\lambda_f\mathop{\sup}\limits_{s\in\mathcal{I}}\mathfrak{n}(s)+2a^2\lambda_hTr(\mathtt{Q})\mathop{\sup}\limits_{s\in\mathcal{I}}\mathfrak{m}(s)\}+4[N_2^\star]^2(3M_i+5M_b)\Big],
\end{align*}and
\begin{align*}
\triangle_2 = \mathop{\max}\limits_{1\leq i\leq \mathcal{N}}\Big\{l_1^*+l_2^*+L_i+l_{m_i}^*+ \mathrm{M}(L_i+l_{m_i}^*+l_b^*)+l_b^*+4\mathrm{M}\mho+4\mathrm{M}\sqrt{a}\sqrt{Tr(\mathtt{Q})}\|\chi\|_{L^2} \Big\}.
\end{align*}
\begin{theorem}\label{thm3.1}
If $(S_\uppsi)-(S_7)$ hold, then the system (\ref{mainequation}) admits at least one solution on $(-\infty,a]$ provided that 
\begin{equation}\label{3.1}   
\max\{\triangle_1,\triangle_2\}\ <\ 1.
\end{equation} 
\end{theorem}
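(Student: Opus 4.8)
The plan is to recast the mild-solution formula \eqref{mildsolution} as a fixed-point problem and apply Mönch's theorem (Lemma \ref{lemma2.4}). First I would strip off the non-trivial initial segment by the standard shift: extend $\uppsi$ to a function $\bar{\uppsi}:(-\infty,a]\to\mathcal{V}$ that coincides with $\uppsi$ on $\mathcal{I}_0$ and vanishes suitably on $\mathcal{I}$, and write every candidate as $\xi = z+\bar{\uppsi}$ with $z_0=0$. This turns the admissible set into a closed, convex subset $\mathbf{B}_r=\{z\in\mathfrak{D}_a:\ z_0=0,\ \|z\|_a\le r\}$ of $\mathfrak{D}_a$ containing $0$, on which I define the operator $\Gamma$ by the three branches on $[0,r_1]$, $(r_i,s_i]$ and $(s_i,r_{i+1}]$ of the right-hand side of \eqref{mildsolution}. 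Throughout, Lemma \ref{lemma3.1} together with $(S_\uppsi)$ is what renders the state-dependent history term $\xi_{\varrho(s,\xi_s)}$ tractable, giving a uniform bound on $\|\xi_{\varrho(s,\xi_s)}\|_{\mathfrak{D}}$ in terms of $\sup_{\tau}\mathbb{E}\|\xi(\tau)\|$ and $\|\uppsi\|_{\mathfrak{D}}$ that feeds into every subsequent estimate.

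Next I would verify the hypotheses of Mönch's theorem. For the self-map property I use $(S_1)$ to bound $\mathcal{T}_q,\mathcal{S}_q$ by $\mathrm{M}$, the growth bounds $(S_2)(i),(S_3)(ii),(S_4)(ii),(S_5)(i),(S_6)(ii),(S_7)(ii)$, the It\^o isometry for the compensated Poisson integral and the $\mathtt{Q}$-Wiener isometry, to estimate $\mathbb{E}\|(\Gamma z)(\tau)\|^2$ on each subinterval. Dividing by $r^2$ and letting $r\to\infty$, the $\liminf$ conditions collapse the growth functions $\mathcal{A}_{l_i},\mathcal{A}_f,\mathcal{A}_h$ to the constants $\lambda_i,\lambda_f,\lambda_h$, so the leading coefficient of $r^2$ is exactly $\triangle_1$; the assumption $\triangle_1<1$ then produces a radius $r$ with $\Gamma(\mathbf{B}_r)\subset\mathbf{B}_r$. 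For continuity of $\Gamma$, the Carath\'eodory-type parts $(S_i)(i)$ give pointwise continuity of the nonlinearities, and continuity of $\Gamma$ follows by dominated convergence, using Vitali's theorem for the stochastic integrals with the growth bounds supplying integrable majorants.

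The decisive step is the compactness closure, since the resolvent family is \emph{not} assumed compact. I would first show $\Gamma(\mathbf{B}_r)$ is equicontinuous on each subinterval using only strong continuity of $\mathcal{T}_q,\mathcal{S}_q$ from $(S_1)$; equicontinuity is precisely what licenses property (6) of Lemma \ref{lemma2.2}, namely $\mu(U)=\sup_{s}\mu(U(s))$. Then, for a countable $\mathbf{G}\subset\mathbf{B}_r$ with $\mathbf{G}\subset\overline{\textbf{co}}(\{0\}\cup\Gamma(\mathbf{G}))$, I apply $\mu$ term by term: the deterministic terms are controlled by $(S_2)(ii),(S_3)(iii),(S_4)(iii),(S_5)(ii)$ and property (6) of Lemma \ref{lemma2.2}; the $\mathtt{Q}$-Wiener integral by Lemma \ref{lemma2.3}, producing the factor $4\mathrm{M}\sqrt{a}\sqrt{Tr(\mathtt{Q})}\|\chi\|_{L^2}$; and the compensated Poisson integral by property (7) of Lemma \ref{lemma2.2} with $(S_7)(iii)$, producing $4\mathrm{M}\mho$. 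Collecting constants gives $\mu(\mathbf{G})\le \triangle_2\,\mu(\mathbf{G})$, so $\triangle_2<1$ forces $\mu(\mathbf{G})=0$, i.e. $\overline{\mathbf{G}}$ is compact. Lemma \ref{lemma2.4} then yields a fixed point of $\Gamma$, and undoing the shift furnishes a mild solution on $(-\infty,a]$.

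The main obstacle I anticipate is the measure-of-noncompactness estimate for the two stochastic integrals. The Wiener term must be routed through Lemma \ref{lemma2.3}, which requires the integrand family to lie in $C(\mathcal{I};\mathbb{L}_\mathtt{Q})$ and to be equicontinuous, so the equicontinuity argument has to be run before the $\mu$-computation; the compensated Poisson integral has no off-the-shelf $\mu$-estimate, so I must combine property (7) of Lemma \ref{lemma2.2} with $(S_7)(iii)$ and absorb everything into the finite constant $\mho$. Matching these two estimates cleanly against the definition of $\triangle_2$, while simultaneously keeping the state-dependent arguments controlled via Lemma \ref{lemma3.1}, is the delicate part of the argument.
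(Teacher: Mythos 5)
Your proposal follows essentially the same route as the paper: the same shift $\xi=\bar{\uppsi}+\zeta$ with $\zeta_0=0$ (the paper takes $\bar{\uppsi}(\tau)=\mathcal{T}_q(\tau)\uppsi(0)$ on $\mathcal{I}$ rather than zero, a cosmetic difference), a self-map argument by contradiction whose $\liminf$ limit produces $\triangle_1$, continuity by dominated convergence, equicontinuity from strong continuity of $\mathcal{T}_q,\mathcal{S}_q$, and the Hausdorff-measure estimate via Lemma \ref{lemma2.3} and property (7) of Lemma \ref{lemma2.2} yielding $\mu(\Upsilon(\mathbf{G}))\le\triangle_2\,\mu(\mathbf{G})$ before invoking M\"onch's theorem. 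Your identification of the constants $4\mathrm{M}\mho$ and $4\mathrm{M}\sqrt{a}\sqrt{Tr(\mathtt{Q})}\|\chi\|_{L^2}$ and of the role of Lemma \ref{lemma3.1} for the state-dependent history matches the paper's computation.
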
 
\begin{proof}
Define $\Gamma:\mathfrak{D}_a \rightarrow \mathfrak{D}_a$ by
\begin{equation}
(\Gamma \xi)(\tau)=
\begin{cases}
 \uppsi(\tau),\ \hspace{9.7cm} \tau \in \mathcal{I}_0; \\
 \mathcal{T}_q(\tau)[ \uppsi(0)- k_1(\xi) - b(0,\uppsi)]+\mathcal{S}_q(\tau)[\xi_1-k_2(\xi)]\\ + b(\tau,\xi_{\varrho(\tau,\xi_\tau)})+ \int_0^\tau \int_{\gamma} \mathcal{S}_q (\tau-s) f(s,\vartheta,\xi_{\varrho(s,\xi_s)}) \widetilde{Z}(ds,d\vartheta) \\+\int_0^\tau \mathcal{T}_q (\tau-s) (\int_{-\infty}^s h(z, \xi_{\varrho(z,\xi_{z})})d\omega(z))ds,\hspace{3.5cm} \tau \in [0,r_1];\\
 l_i(\xi_{r_i}) + m_i(\tau, \xi_{\varrho(\tau,\xi_\tau)}),\hspace{6.7cm} \tau\in \mathop{\cup}\limits_{i=1}^{\mathcal{N}}(r_i,s_i]; \\
 \mathcal{T}_q(\tau-s_i)[ l_i(\xi_{s_i}) + m_i(s_i, \xi_{\varrho(s_i,\xi_{s_i})}) - b(s_i,\xi_{\varrho(s_i,\xi_{s_i})})]\\+ b(\tau,\xi_{\varrho(\tau,\xi_\tau)})
+ \int_{s_i}^\tau \int_{\gamma} \mathcal{S}_q (\tau-s) f(s,\vartheta,\xi_{\varrho(s,\xi_s)}) \widetilde{Z}(ds,d\vartheta) \\+ \int_{s_i}^\tau \mathcal{T}_q (\tau-s) (\int_{-\infty}^s h(z, \xi_{\varrho(z,\xi_{z})})d\omega(z))ds,\hspace{3.3cm} \tau\in \mathop{\cup}\limits_{i=1}^{\mathcal{N}}(s_i,r_{i+1}].
\end{cases}
\end{equation}
We shall show that $\Gamma$ has a fixed point in $\mathfrak{D}_a$ which is a mild solution of (\ref{mainequation}).\\
Let $\overline{\uppsi}(\cdot)\ :\ (-\infty,a]\ \rightarrow \mathcal{V}$ be given by  \begin{equation*}
\overline{\uppsi}(\tau)=
\begin{cases}
\uppsi(\tau) ,\hspace{1cm} \tau\in \mathcal{I}_0;\\
\mathcal{T}_q(\tau) \uppsi(0),\ \tau \in \mathcal{I}.
\end{cases}
\end{equation*}
Then $\overline{\uppsi}\in \mathfrak{D}_a$ and $\overline{\uppsi}_0=\uppsi$. Split  $\xi(\tau)=\overline{\uppsi}(\tau)+\zeta(\tau),\   -\infty<\tau\leq a$. Clearly, $\xi(\cdot)$ satisfies (\ref{mildsolution}) if and only if $\zeta_0=0$ and take $\bar{\zeta}$ characterized by
\begin{equation*}
\bar{\zeta}(\tau)=
\begin{cases}
0 ,  \qquad \tau\in \mathcal{I}_0;\\
\zeta(\tau),\quad  \tau \in \mathcal{I},
\end{cases}
\end{equation*}
where
\begin{equation}
\zeta(\tau)=
\begin{cases}
 \mathcal{T}_q(\tau)[-k_1(\overline{\uppsi}+\zeta) - b(0,\uppsi)]+\mathcal{S}_q(\tau)[\xi_1-k_2(\overline{\uppsi}+\zeta)] + b(\tau,\overline{\uppsi}_{\varrho(\tau,\overline{\uppsi}_\tau+\zeta_\tau)}+\zeta_{\varrho(\tau,\overline{\uppsi}_\tau+\zeta_\tau)}\\+ \int_0^\tau \int_{\gamma} \mathcal{S}_q (\tau-s) f(s,\vartheta,\overline{\uppsi}_{\varrho(s,\overline{\uppsi}_s+\zeta_s)}+\zeta_{\varrho(s,\overline{\uppsi}_s+\zeta_s)}) \widetilde{Z}(ds,d\vartheta) \\+\int_0^\tau \mathcal{T}_q (\tau-s) \big(\int_{-\infty}^s h(z, \overline{\uppsi}_{\varrho(z,\overline{\uppsi}_{z}+\zeta_{z})}+\zeta_{\varrho(z,\overline{\uppsi}_{z}+\zeta_{z})})d\omega(z)\big)ds,\ \hspace{1cm}  \tau \in [0,r_1];\\
 l_i(\overline{\uppsi}_{r_i}+\zeta_{r_i}) + m_i(\tau,\overline{\uppsi}_{\varrho(\tau,\overline{\uppsi}_\tau+\zeta_\tau)}+\zeta_{\varrho(\tau,\overline{\uppsi}_\tau+\zeta_\tau)})-\mathcal{T}_q(\tau)\uppsi(0),\ \hspace{1cm} \tau\in \mathop{\cup}\limits_{i=1}^{\mathcal{N}}(r_i,s_i]; \\
 \mathcal{T}_q(\tau-s_i)[ l_i(\overline{\uppsi}_{s_i}+\zeta_{s_i}) + m_i(s_i,\overline{\uppsi}_ {\varrho(s_i,\overline{\uppsi}_{s_i}+\zeta_{s_i})}+\zeta_{\varrho(s_i,\overline{\uppsi}_{s_i}+\zeta_{s_i})}) \\- b(s_i,\overline{\uppsi}_ {\varrho(s_i,\overline{\uppsi}_{s_i}+\zeta_{s_i})}+\zeta_{\varrho(s_i,\overline{\uppsi}_{s_i}+\zeta_{s_i})})]-\mathcal{T}_q(\tau)\uppsi(0)+ b(\tau,\overline{\uppsi}_{\varrho(\tau,\overline{\uppsi}_\tau+\zeta_\tau)}+\zeta_{\varrho(\tau,\overline{\uppsi}_\tau+\zeta_\tau)})\\
+ \int_{s_i}^\tau \int_{\gamma} \mathcal{S}_q (\tau-s) f(s,\vartheta,\overline{\uppsi}_{\varrho(s,\overline{\uppsi}_s+\zeta_s)}+\zeta_{\varrho(s,\overline{\uppsi}_s+\zeta_s)}) \widetilde{Z}(ds,d\vartheta) \\+ \int_{s_i}^\tau \mathcal{T}_q (\tau-s) (\int_{-\infty}^s h(z,\overline{\uppsi}_{\varrho(z,\overline{\uppsi}_{z}+\zeta_{z})}+\zeta_{\varrho(z,\overline{\uppsi}_{z}+\zeta_{z})})d\omega(z))ds,\ \hspace{1cm} \tau\in \mathop{\cup}\limits_{i=1}^{\mathcal{N}}(s_i,r_{i+1}].
\end{cases}
\end{equation}
Define $\mathfrak{D}_a^0=\{\zeta \in \mathfrak{D}_a:\ \zeta_0=0 \in \mathfrak{D}\}$, and for $\zeta \in \mathfrak{D}_a^0$, we set
\begin{align*}
\|\zeta\|_a= {\|\zeta_0 \|}_{\mathfrak{D}}+\big(\mathop{\sup}\limits_{s\in \mathcal{I}}\mathbb{E}\|\zeta(s)\|^2\ \big)^{1/2}= \big(\mathop{\sup}\limits_{s\in \mathcal{I}}\mathbb{E}\|\zeta(s)\|^2\ \big)^{1/2}.
\end{align*}
Then $(\mathfrak{D}_a^0,\|\cdot\|_a)$ is a Banach space. For every $\alpha>0$, let
$\overline{\mathtt{B}}_\alpha=\{\zeta \in \mathfrak{D}_a^0:\ \|\zeta\|_a^2 \leq \alpha\}$.
Clearly, $\overline{\mathtt{B}}_\alpha \subset \mathfrak{D}_a^0$ is convex, closed and bounded. Using Lemma~\ref{lemma3.1}, for $\zeta \in \overline{\mathtt{B}}_\alpha$, we have 
\begin{align}\label{3.4}
&\|\overline{\uppsi}_{\varrho(\tau,\overline{\uppsi}_\tau+\zeta_\tau)}+\zeta_{\varrho(\tau,\overline{\uppsi}_\tau+\zeta_\tau)}\|_{\mathfrak{D}}^2 \nonumber\\&\leq \ 2 \big(\|\overline{\uppsi}_{\varrho(\tau,\overline{\uppsi}_\tau+\zeta_\tau)}\|_{\mathfrak{D}}^2+\|\zeta_{\varrho(\tau,\overline{\uppsi}_\tau+\zeta_\tau)}\|_\mathfrak{D}^2 \big) \nonumber \\&\leq 4\Big\{[N_2^\star]^2 \mathop{\sup}\limits_{s\in \mathcal{I}}\mathbb{E}\|\overline{\uppsi}(s)\|^2+(N_3^\star+J^\star)^2 \| \overline{\uppsi}_0 \|_{\mathfrak{D}}^2 \nonumber +[N_2^\star]^2 \mathop{\sup}\limits_{s\in \mathcal{I}}\mathbb{E}\|\zeta(s)\|^2+(N_3^\star+J^\star)^2 \| \zeta_0 \|_\mathfrak{D}^2\Big\}\nonumber \\ &\leq 4\Big\{ [N_2^\star]^2 \mathrm{M}^2\|{\uppsi}(0)\|^2 + (N_3^\star+J^\star)^2\|\uppsi\|_\mathfrak{D}^2+[N_2^\star]^2 \mathop{\sup}\limits_{s\in \mathcal{I}}\mathbb{E}\|\zeta(s)\|^2\}\nonumber \\ &\leq 4\{[N_2^\star]^2 \alpha +[N_2^\star]^2 \mathrm{M}^2 N_1^2 \|{\uppsi}\|_\mathfrak{D}^2 + (N_3^\star+J^\star)^2\|\uppsi\|_\mathfrak{D}^2\} = 4[N_2^\star]^2 \alpha + C_0 = r^*\ ; \ \tau\in \mathcal{I},
\end{align}
where $ C_0 = \big([N_2^\star]^2 \mathrm{M}^2N_1^2 + (N_3^\star+J^\star)^2\big)\|\uppsi\|_\mathfrak{D}^2$.\\
Next, Lemma~\ref{lemma2.1} yields that
\begin{align}\label{3.5}
\|\overline{\uppsi}_\tau+\zeta_\tau\|_{\mathfrak{D}}^2 \  \leq&\  4\{[N_2^\star]^2 \mathop{\sup}\limits_{\tau\in \mathcal{I}}\mathbb{E}\|{\uppsi}(s)\|^2+[N_3^\star]^2 \|{\uppsi}(0) \|_{\mathfrak{D}}^2\nonumber \\&+[N_2^\star]^2 \mathop{\sup}\limits_{\tau\in \mathcal{I}}\mathbb{E}\|\zeta(s)\|^2+[N_3^\star]^2 \| \zeta_0 \|_\mathfrak{D}^2\}\nonumber \\ =&\  4[N_2^\star]^2 \alpha + C_1 = r^{**}\ , \ \tau\in \mathcal{I},
\end{align}
where $C_1 = \big([N_2^\star]^2 \mathrm{M}^2 N_1^2+ [N_3^\star]^2\big)\|\uppsi\|_\mathfrak{D}^2$.

Now define the operator $\Upsilon : \mathfrak{D}_a^0 \rightarrow \mathfrak{D}_a^0$ by
\begin{equation}
(\Upsilon \zeta)(\tau)=
\begin{cases}
 \mathcal{T}_q(\tau)[-k_1(\overline{\uppsi}+\zeta) - b(0,\uppsi)]+\mathcal{S}_q(\tau)[\xi_1-k_2(\overline{\uppsi}+\zeta)] + b(\tau,\overline{\uppsi}_{\varrho(\tau,\overline{\uppsi}_\tau+\zeta_\tau)}+\zeta_{\varrho(\tau,\overline{\uppsi}_\tau+\zeta_\tau)}) \\+ \int_0^\tau \int_{\gamma} \mathcal{S}_q (\tau-s) f(s,\vartheta,\overline{\uppsi}_{\varrho(s,\overline{\uppsi}_s+\zeta_s)}+\zeta_{\varrho(s,\overline{\uppsi}_s+\zeta_s)}) \widetilde{Z}(ds,d\vartheta) \\+\int_0^\tau \mathcal{T}_q (\tau-s) \big(\int_{-\infty}^s h(z, \overline{\uppsi}_{\varrho(z,\overline{\uppsi}_{z}+\zeta_{z})}+\zeta_{\varrho(z,\overline{\uppsi}_{z}+\zeta_{z})})d\omega(z)\big)ds,\ \hspace{3cm}  \tau \in [0,r_1];\\
 l_i(\overline{\uppsi}_{r_i}+\zeta_{r_i}) + m_i(\tau,\overline{\uppsi}_{\varrho(\tau,\overline{\uppsi}_\tau+\zeta_\tau)}+\zeta_{\varrho(\tau,\overline{\uppsi}_\tau+\zeta_\tau)})-\mathcal{T}_q(\tau)\uppsi(0),\ \hspace{2.7cm} \tau\in \mathop{\cup}\limits_{i=1}^{\mathcal{N}}(r_i,s_i]; \\
 \mathcal{T}_q(\tau-s_i)[ l_i(\overline{\uppsi}_{s_i}+\zeta_{s_i}) + m_i(s_i,\overline{\uppsi}_ {\varrho(s_i,\overline{\uppsi}_{s_i}+\zeta_{s_i})}+\zeta_{\varrho(s_i,\overline{\uppsi}_{s_i}+\zeta_{s_i})}) \\- b(s_i,\overline{\uppsi}_ {\varrho(s_i,\overline{\uppsi}_{s_i}+\zeta_{s_i})}+\zeta_{\varrho(s_i,\overline{\uppsi}_{s_i}+\zeta_{s_i})})]-\mathcal{T}_q(\tau)\uppsi(0)+ b(\tau,\overline{\uppsi}_{\varrho(\tau,\overline{\uppsi}_\tau+\zeta_\tau)}+\zeta_{\varrho(\tau,\overline{\uppsi}_\tau+\zeta_\tau)})\\
+ \int_{s_i}^\tau \int_{\gamma} \mathcal{S}_q (\tau-s) f(s,\vartheta,\overline{\uppsi}_{\varrho(s,\overline{\uppsi}_s+\zeta_s)}+\zeta_{\varrho(s,\overline{\uppsi}_s+\zeta_s)}) \widetilde{Z}(ds,d\vartheta) \\+ \int_{s_i}^\tau \mathcal{T}_q (\tau-s) \big(\int_{-\infty}^s h(z,\overline{\uppsi}_{\varrho(z,\overline{\uppsi}_{z}+\zeta_{z})}+\zeta_{\varrho(z,\overline{\uppsi}_{z}+\zeta_{z})})d\omega(z)\big)ds,\ \hspace{2cm} \tau\in \mathop{\cup}\limits_{i=1}^{\mathcal{N}}(s_i,r_{i+1}].
\end{cases}
\end{equation}
Clearly, $\Gamma$ has a fixed point if and only if $\Upsilon$ has a fixed point. Therefore, it suffices to prove that $\Upsilon$ has a fixed point. For accessibility, the proof is splitted into three steps:\\ 
\textbf{Step \rom{1}:} We assert that $\Upsilon(\overline{\mathtt{B}}_\alpha) \subset \overline{\mathtt{B}}_\alpha$, for some $\alpha>0$.\\
On contrary, assume that $\Upsilon (\overline{\mathtt{B}}_\alpha) \not\subset \overline{\mathtt{B}}_\alpha$. Then for every $\alpha>0$, there is $\zeta^\alpha(\cdot) \in \overline{\mathtt{B}}_\alpha$ and $\tau=\tau(\alpha)\in \mathcal{I}$, such that $\Upsilon(\zeta^\alpha) \notin\ \overline{\mathtt{B}}_\alpha$, that is, $\alpha<\mathbb{E}\|(\Upsilon \zeta^\alpha)(\tau)\|^2$ for some $\tau=\tau(\alpha) \in \mathcal{I}$. In fact, for $\tau=\tau(\alpha)\in [0,r_1]$, we get
\begin{align}
\alpha\ &<  \ \mathbb{E}\|(\Upsilon \zeta^\alpha)(\tau)\|^2 \nonumber\\  \ &\leq \ 5 \Big[\mathbb{E}\|\mathcal{T}_q(\tau)[-k_1(\overline{\uppsi}+\zeta^\alpha) - b(0,\uppsi)]\|^2+\mathbb{E}\|\mathcal{S}_q(\tau)[\xi_1-k_2(\overline{\uppsi}+\zeta^\alpha)]\|^2 \nonumber \\&\quad\ \ +\mathbb{E} \|b(\tau,\overline{\uppsi}_{\varrho(\tau,\overline{\uppsi}_\tau+\zeta_\tau^\alpha)}+\zeta_{\varrho(\tau,\overline{\uppsi}_\tau+\zeta_\tau^\alpha)}^\alpha\|^2\nonumber \\&\quad\ \  + \mathbb{E}\|\int_0^\tau \int_{\gamma} \mathcal{S}_q (\tau-s) f(s,\vartheta,\overline{\uppsi}_{\varrho(s,\overline{\uppsi}_s+\zeta_s^\alpha)}+\zeta_{\varrho(s,\overline{\uppsi}_s+\zeta_s^\alpha)}^\alpha) \widetilde{Z}(ds,d\vartheta)\|^2 \nonumber \\&\quad\ \ +\mathbb{E}\|\int_0^\tau \mathcal{T}_q (\tau-s) \big(\int_{-\infty}^s h(z, \overline{\uppsi}_{\varrho(z,\overline{\uppsi}_{z}+\zeta_{z}^\alpha)}+\zeta_{\varrho(z,\overline{\uppsi}_{z}+\zeta_{z}^\alpha)}^\alpha)d\omega(z)\big)ds\|^2\Big]\nonumber \\&\quad =5\mathop{\sum}\limits_{i=1}^5 J_i.
\end{align}
Using assumptions $(S_1)$, $(S_2)$(i), $(S_3)$(i), and Estimates \ref{3.4} and \ref{3.5}, we obtain
\begin{align}
J_1 \leq& \ \mathbb{E}\|\mathcal{T}_q(\tau)[k_1(\overline{\uppsi}+\zeta^\alpha) + b(0,\uppsi)]\|^2\nonumber \\ \leq& \ 2\mathrm{M}^2[M_b(1+\|\uppsi\|_\mathfrak{D}^2)+L_{k_1}\big(1+N_1^2\|\overline{\uppsi}_\tau+\zeta_\tau^\alpha\|_\mathfrak{D}^2\big)]\nonumber \\ \leq& \ 2\mathrm{M}^2[M_b(1+\|\uppsi\|_\mathfrak{D}^2)+L_{k_1}\big(1+N_1^2(4[N_2^\star]^2\alpha+C_1)\big)],\\ 
J_2 \leq& \ \mathbb{E}\|\mathcal{S}_q(\tau)[\xi_1+k_2(\overline{\uppsi}+\zeta^\alpha)]\|^2 \nonumber \\ \leq& \ 2\mathrm{M}^2[\mathbb{E}\|\xi_1\|^2+L_{k_2}(1+N_1^2\|\overline{\uppsi}_\tau+\zeta_\tau^\alpha\|_\mathfrak{D}^2)]\nonumber \\ \leq& \ 2\mathrm{M}^2[\mathbb{E}\|\xi_1\|^2+L_{k_2}(1+N_1(4[N_2^\star]^2\alpha+C_1))],\\ 
J_3 \leq& \ \mathbb{E} \|b(\tau,\overline{\uppsi}_{\varrho(\tau,\overline{\uppsi}_\tau+\zeta_\tau^\alpha)}+\zeta_{\varrho(\tau,\overline{\uppsi}_\tau+\zeta_\tau^\alpha)}^\alpha)\|^2\nonumber \\ \leq& \ M_b (1+\|\overline{\uppsi}_{\varrho(\tau,\overline{\uppsi}_\tau+\zeta_\tau^\alpha)}+\zeta_{\varrho(\tau,\overline{\uppsi}_\tau+\zeta_\tau^\alpha)}^\alpha\|_\mathfrak{D}^2)\nonumber\\ \leq& \ M_b(1+4[N_2^\star]^2\alpha+C_0),
\end{align}
Using $(S_7)$(ii), Estimate \ref{3.4}, and Remark $3.3.3$ in \cite{Zhuthesis}, it follows that 
\begin{align}
J_4 \leq& \ \mathbb{E}\|\int_0^\tau \int_{\gamma} \mathcal{S}_q (\tau-s) f(s,\vartheta,\overline{\uppsi}_{\varrho(s,\overline{\uppsi}_s+\zeta_s^\alpha)}+\zeta_{\varrho(s,\overline{\uppsi}_s+\zeta_s^\alpha)}^\alpha) \widetilde{Z}(ds,d\vartheta)\|^2 \nonumber \\ \leq& \ \int_0^\tau \int_{\gamma} \mathbb{E}\|\mathcal{S}_q (\tau-s) f(s,\vartheta,\overline{\uppsi}_{\varrho(s,\overline{\uppsi}_s+\zeta_s^\alpha)}+\zeta_{\varrho(s,\overline{\uppsi}_s+\zeta_s^\alpha)}^\alpha)\|^2 \varkappa(d\vartheta)ds \nonumber \\ \leq& \ \mathrm{M}^2 \int_0^\tau \mathfrak{n}(s)\mathcal{A}_f(\|\overline{\uppsi}_{\varrho(s,\overline{\uppsi}_s+\zeta_s^\alpha)}+\zeta_{\varrho(s,\overline{\uppsi}_s+\zeta_s^\alpha)}^\alpha\|_\mathfrak{D}^2)ds \leq\ \mathrm{M}^2r_1 \mathcal{A}_f(r^*)\mathop{\sup}\limits_{s\in[0,r_1]}\mathfrak{n}(s),
\end{align}
Next, $(S_6)$(ii), (iii), Lemma \ref{lemma2.3}, Holder's inequality, and Estimate \ref{3.4} yield that
\begin{align}
J_5 &\leq \ \int_0^\tau \|\mathcal{T}_q (\tau-s)\|^2ds \int_0^\tau\mathbb{E}\|\Big(\int_{-\infty}^0 h(z, \overline{\uppsi}_{\varrho(z,\overline{\uppsi}_{z}+\zeta_{z}^\alpha)}+\zeta_{\varrho(z,\overline{\uppsi}_{z}+\zeta_{z}^\alpha)}^\alpha)d\omega(z)\nonumber \\&\quad\ +\int_{0}^s h(z, \overline{\uppsi}_{\varrho(z,\overline{\uppsi}_{z}+\zeta_{z}^\alpha)}+\zeta_{\varrho(z,\overline{\uppsi}_{z}+\zeta_{z}^\alpha)}^\alpha)d\omega(z)\Big)\|^2ds\nonumber \\ &\leq \ \mathrm{M}^2 r_1 \int_0^\tau [2 M_h + 2  \mathbb{E}\int_0^s \|h(z, \overline{\uppsi}_{\varrho(z,\overline{\uppsi}_{z}+\zeta_{z}^\alpha)}+\zeta_{\varrho(z,\overline{\uppsi}_{z}+\zeta_{z}^\alpha)}^\alpha) d\omega(z)\|^2 ]ds\nonumber \\&\leq \mathrm{M}^2r_1[2M_hr_1+2Tr(\mathtt{Q})\int_0^\tau\mathbb{E}\int_0^s \|h(z, \overline{\uppsi}_{\varrho(z,\overline{\uppsi}_{z}+\zeta_{z}^\alpha)}+\zeta_{\varrho(z,\overline{\uppsi}_{z}+\zeta_{z}^\alpha)}^\alpha) dz\|_\mathtt{Q}^2ds] \nonumber \\ &\leq \ \mathrm{M}^2r_1^2[2M_h+2Tr(\mathtt{Q}) \mathcal{A}_h(\|\overline{\uppsi}_{\varrho(z,\overline{\uppsi}_{z}+\zeta_{z}^\alpha)}+\zeta_{\varrho(z,\overline{\uppsi}_{z}+\zeta_{z}^\alpha)}^\alpha\|_\mathfrak{D}^2)\mathop{\sup}\limits_{s\in[0,r_1]}\mathfrak{m}(s)]\nonumber \\ &\leq \ \mathrm{M}^2r_1^2\big[2M_h+2Tr(\mathtt{Q}) \mathcal{A}_h(r^*)\mathop{\sup}\limits_{s\in[0,r_1]}\mathfrak{m}(s)\big].
\end{align}
On combining (3.7) to (3.12), for $\tau=\tau(\alpha)\in[0,r_1]$, we get
\begin{align*}
\alpha <\ \mathbb{E}&\|(\Upsilon \zeta^\alpha)(\tau)\|^2 \\ \leq  5& \Big[2\mathrm{M}^2\{M_b(1+\|\uppsi\|_\mathfrak{D}^2)+L_{k_1}\big(1+N_1^2(4[N_2^\star]^2\alpha+C_1)\big)+\mathbb{E}\|\xi_1\|^2+L_{k_2}\big(1+N_1^2(4[N_2^\star]^2\alpha+C_1)\big)\}\\&+ M_b(1+4[N_2^\star]^2\alpha+C_0)+ \mathrm{M}^2r_1 \mathcal{A}_f(r^*)\mathop{\sup}\limits_{s\in[0,r_1]}\mathfrak{n}(s)+ \mathrm{M}_2r_1^2[2M_h+2Tr(\mathtt{Q}) \mathcal{A}_h(r^*)\mathop{\sup}\limits_{s\in[0,r_1]}\mathfrak{m}(s)]\Big].
\end{align*}
Further, for $\tau=\tau(\alpha)\in \mathop{\cup}\limits_{i=1}^{\mathcal{N}}(r_i,s_i]$, using $(S_4)$(ii) and $(S_5)$(i), we obtain
\begin{align*}
\alpha< \mathbb{E}\|(\Upsilon \zeta^\alpha)(\tau)\|^2 \leq& \ 3[\mathbb{E}\|l_i(\overline{\uppsi}_{r_i}+\zeta_{r_i}^\alpha)\|^2 + \mathbb{E}\|m_i(\tau,\overline{\uppsi}_{\varrho(\tau,\overline{\uppsi}_\tau+\zeta_\tau^\alpha)}+\zeta_{\varrho(\tau,\overline{\uppsi}_\tau+\zeta_\tau^\alpha)}^\alpha)\|^2+\mathbb{E}\|\mathcal{T}_q(\tau)\uppsi(0)\|^2]\\
\leq& \ 3[\mathcal{A}_{l_i}(r^{**})+M_i(1+C_0+4[N_2^\star]^2\alpha)+ \mathrm{M}^2N_1^2\|\uppsi\|_\mathfrak{D}^2].
\end{align*}
Lastly, for $\tau=\tau(\alpha)\in \mathop{\cup}\limits_{i=1}^{\mathcal{N}}(s_i,r_{i+1}]$, a set of similar arguments as above imply that
\begin{align*}
 \alpha\ &< \ \mathbb{E}\|(\Upsilon \zeta^\alpha)(\tau)\|^2 
 \\ \ &\leq \  15\mathrm{M}^2\big[\mathcal{A}_{l_i}(r^{**})+M_i(1+C_0+4[N_2^\star]^2\alpha)+M_b(1+C_0+4[N_2^\star]^2\alpha)\big]+5\mathrm{M}^2N_1^2\|\uppsi\|_\mathfrak{D}^2\\&\quad\ +5M_b(1+C_0+4[N_2^\star]^2\alpha)+\mathrm{M}^2(r_{i+1}-s_i) \mathcal{A}_f(r^*)\mathop{\sup}\limits_{s\in (s_i,r_{i+1}]}\mathfrak{n}(s)\\&\quad\ +5\mathrm{M}^2(r_{i+1}-s_i)^2\big[2M_h+2Tr(\mathtt{Q})\mathcal{A}_h(r^*)\mathop{\sup}\limits_{s\in (s_i,r_{i+1}]}\mathfrak{m}(s)\big].
\end{align*}
Thus for $\tau=\tau(\alpha)\in \mathcal{I}$,
\begin{align}\label{inequality}
\alpha\ <\  \mathbb{E}\|(\Upsilon \zeta^\alpha)(\tau)\|^2 &\leq\ \widehat{M}+5\mathrm{M}^2\big\{8N_1 [N_2^\star]^2\alpha(L_{k_1}+L_{k_2})+3\mathcal{A}_{l_i}(r^{**})+4[N_2^\star]^2\alpha(3M_i+3M_b)\nonumber\\&\quad+a\mathcal{A}_f(r^*)\mathop{\sup}\limits_{s\in\mathcal{I}}\mathfrak{n}(s)+2a^2Tr(\mathtt{Q})\mathcal{A}_h(r^*)\mathop{\sup}\limits_{s\in\mathcal{I}}\mathfrak{m}(s)\big\}+4[N_2^\star]^2\alpha(3M_i+5M_b),
\end{align}
\begin{align*}
\text{where} \ \widehat{M}&= \ 5\mathrm{M}^2\{2M_b(1+\|\uppsi\|_\mathfrak{D}^2)+L_{k_1}(1+N_1^2C_1)+\|\xi\|^2+L_{k_2}(1+N_1^2C_1)+\|\uppsi\|_\mathfrak{D}^2\\&\qquad+(3M_b+3M_i)(C_0+1)\}+3M_i(C_0+1).
\end{align*}
 Dividing both sides of Inequality \ref{inequality} by $\alpha$ and letting as $\alpha \rightarrow \infty$, we get
\begin{align*}
1<\ \mathop{\max}\limits_{1\leq i \leq \mathcal{N}} &[5\mathrm{M}^2\{8N_1 [N_2^\star]^2(L_{k_1}+L_{k_2})+3\lambda_i+4[N_2^\star]^2(3M_i+3M_b )\\&+a\lambda_f \mathop{\sup}\limits_{s\in\mathcal{I}}\mathfrak{n}(s)+2a^2\lambda_hTr(\mathtt{Q})\mathop{\sup}\limits_{s\in\mathcal{I}}\mathfrak{m}(s)\}+4[N_2^\star]^2(3M_i+5M_b)],
\end{align*}
which is a contradiction to (\ref{3.1}). Therefore, there is some $\alpha>0$ to ensure that $\Upsilon(\overline{\mathtt{B}}_\alpha) \subset \overline{\mathtt{B}}_\alpha$.\\
\textbf{Step \rom{2}:}  $\Upsilon : \overline{\mathtt{B}}_\alpha  \rightarrow  \overline{\mathtt{B}}_\alpha$ is continuous.\\
Let $\{\zeta^{(n)}\}_{n\in \mathbb{N}}\ \subseteq \ \mathfrak{D}_a^0$ be a sequence such that $\zeta^{(n)} \rightarrow  \zeta \in \mathfrak{D}_a^0  $ as $(n \rightarrow \infty)$. So there is an $\alpha>0$ such that $\mathbb{E}\|\zeta^{(n)}(\tau)\|^2 \leq \alpha$ for all $n$ and a.s.  $\tau \in \mathcal{I}$, $\zeta^{(n)} \in~ \overline{\mathtt{B}}_\alpha$ and $\zeta \in \overline{\mathtt{B}}_\alpha$.
 Clearly, Estimates (\ref{3.4}) and (\ref{3.5}) hold for each $\zeta^{(n)}$.
By Axiom \rom{1}, we observe that $\zeta_\tau^{(n)}\rightarrow \zeta_\tau$ as $n \rightarrow \infty$ uniformly for $\tau\in(-\infty,a]$. 
Now $(S_2)$(i), $(S_6)$(i), $(S_7)$(i) and the inequality
\begin{align*}
&\|h(z, \overline{\uppsi}_{\varrho(z,\overline{\uppsi}_{z}+\zeta_{z}^{(n)})}+\zeta_{\varrho(z,\overline{\uppsi}_{z}+\zeta_{z}^{(n)})}^{(n)})- h(z, \overline{\uppsi}_{\varrho(z,\overline{\uppsi}_{z}+\zeta_{z})}+\zeta_{\varrho(z,\overline{\uppsi}_{z}+\zeta_{z)}})\|^2\\&\leq 2\|h(z, \overline{\uppsi}_{\varrho(z,\overline{\uppsi}_{z}+\zeta_{z}^{(n)})}+\zeta_{\varrho(z,\overline{\uppsi}_{z}+\zeta_{z}^{(n)})}^{(n)})- h(z, \overline{\uppsi}_{\varrho(z,\overline{\uppsi}_{z}+\zeta_{z}^{(n)})}+\zeta_{\varrho(z,\overline{\uppsi}_{z}+\zeta_{z}^{(n)})})\|^2\\&\quad+2\| h(z, \overline{\uppsi}_{\varrho(z,\overline{\uppsi}_{z}+\zeta_{z}^{(n)})}+\zeta_{\varrho(z,\overline{\uppsi}_{z}+\zeta_{z}^{(n)})})- h(z, \overline{\uppsi}_{\varrho(z,\overline{\uppsi}_{z}+\zeta_{z})}+\zeta_{\varrho(z,\overline{\uppsi}_{z}+\zeta_{z})})\|^2,
\end{align*} yield that $h(z, \overline{\uppsi}_{\varrho(z,\overline{\uppsi}_{z}+\zeta_{z}^{(n)})}+\zeta_{\varrho(z,\overline{\uppsi}_{z}+\zeta_{z}^{(n)})}^{(n)}) \rightarrow h(z, \overline{\uppsi}_{\varrho(z,\overline{\uppsi}_{z}+\zeta_{z})}+\zeta_{\varrho(z,\overline{\uppsi}_{z}+\zeta_{z})}),\ n \rightarrow \infty$.\\
Similarly,
\begin{align*}
 b(\tau,\overline{\uppsi}_{\varrho(\tau,\overline{\uppsi}_\tau+\zeta_\tau^{(n)})}+\zeta_{\varrho(\tau,\overline{\uppsi}_\tau+\zeta_\tau^{(n)})}^{(n)}) \rightarrow b(\tau,\overline{\uppsi}_{\varrho(\tau,\overline{\uppsi}_\tau+\zeta_\tau)}+\zeta_{\varrho(\tau,\overline{\uppsi}_\tau+\zeta_\tau)}),\ n\rightarrow \infty,\\
f(s,\vartheta,\overline{\uppsi}_{\varrho(s,\overline{\uppsi}_s+\zeta_s^{(n)})}+\zeta_{\varrho(s,\overline{\uppsi}_s+\zeta_s^{(n)})}^{(n)})\rightarrow f(s,\vartheta,\overline{\uppsi}_{\varrho(s,\overline{\uppsi}_s+\zeta_s)}+\zeta_{\varrho(s,\overline{\uppsi}_s+\zeta_s)}),\ n \rightarrow \infty.
\end{align*}
Moreover, \begin{center}
$k_1(\overline{\uppsi}+\zeta^{(n)})\rightarrow k_1(\overline{\uppsi}+\zeta),\  k_2(\overline{\uppsi}+\zeta^{(n)})\rightarrow k_2(\overline{\uppsi}+\zeta),$ $n\rightarrow \infty,$
\end{center}
\begin{align*}
\int_0^s\mathbb{E}\|h(z, \overline{\uppsi}_{\varrho(z,\overline{\uppsi}_{z}+\zeta_{z}^{(n)})}+\zeta_{\varrho(z,\overline{\uppsi}_{z}+\zeta_{z}^{(n)})}^{(n)}) - h(z, \overline{\uppsi}_{\varrho(z,\overline{\uppsi}_{z}+\zeta_{z)}}+\zeta_{\varrho(z,\overline{\uppsi}_{z}+\zeta_{z)}})\|^2dz \leq 2\mathfrak{m}(s)\mathcal{A}_h(r^*),\\
\int_\gamma\big[\mathbb{E}\|f(s,\vartheta,\overline{\uppsi}_{\varrho(s,\overline{\uppsi}_s+\zeta_s^{(n)})}+\zeta_{\varrho(s,\overline{\uppsi}_s+\zeta_s^{(n)})}^{(n)})- f(s,\vartheta,\overline{\uppsi}_{\varrho(s,\overline{\uppsi}_s+\zeta_s)}+\zeta_{\varrho(s,\overline{\uppsi}_s+\zeta_s)})\|^2\varkappa d(\vartheta) \leq 2\mathfrak{n}(s)\mathcal{A}_f(r^{*}).
\end{align*}
Thus for $\tau \in[0,r_1]$, by the Lebesgue dominated convergence theorem, we have
\begin{align*}
&\mathbb{E}\|(\Upsilon \zeta^{(n)})(\tau)-(\Upsilon \zeta)(\tau)\|^2\\ &\leq  5\mathrm{M}^2 \Big[\mathbb{E}\|k_1(\overline{\uppsi}+\zeta^{(n)})-k_1(\overline{\uppsi}+\zeta)\|^2+\mathbb{E}\|k_2(\overline{\uppsi}+\zeta^{(n)})-k_2(\overline{\uppsi}+\zeta)\|^2] \\&\quad+\mathbb{E} \|b(\tau,\overline{\uppsi}_{\varrho(\tau,\overline{\uppsi}_\tau+\zeta_\tau^{(n)})}+\zeta_{\varrho(\tau,\overline{\uppsi}_\tau+\zeta_\tau^{(n)})}^{(n)}) -b(\tau,\overline{\uppsi}_{\varrho(\tau,\overline{\uppsi}_\tau+\zeta_\tau)}+\zeta_{\varrho(\tau,\overline{\uppsi}_\tau+\zeta_\tau)})\|^2\\&\quad+ \int_0^\tau \int_{\gamma} \mathbb{E}\| f(s,\vartheta,\overline{\uppsi}_{\varrho(s,\overline{\uppsi}_s+\zeta_s^{(n)})}+\zeta_{\varrho(s,\overline{\uppsi}_s+\zeta_s^{(n)})}^{(n)})- f(s,\vartheta,\overline{\uppsi}_{\varrho(s,\overline{\uppsi}_s+\zeta_s)}+\zeta_{\varrho(s,\overline{\uppsi}_s+\zeta_s)})\|^2 \varkappa(d\vartheta)ds \\&\quad+\int_0^\tau \mathbb{E}\| \int_0^s \Big[h(z, \overline{\uppsi}_{\varrho(z,\overline{\uppsi}_{z}+\zeta_{z}^{(n)})}+\zeta_{\varrho(z,\overline{\uppsi}_{z}+\zeta_{z}^{(n)})}^{(n)})-h(z, \overline{\uppsi}_{\varrho(z,\overline{\uppsi}_{z}+\zeta_{z})}+\zeta_{\varrho(z,\overline{\uppsi}_{z}+\zeta_{z})})\Big]d\omega(z)\|^2ds\\ &\leq  5\mathrm{M}^2 \Big[\mathbb{E}\|k_1(\overline{\uppsi}+\zeta^{(n)})-k_1(\overline{\uppsi}+\zeta)\|^2+\mathbb{E}\|k_2(\overline{\uppsi}+\zeta^{(n)})-k_2(\overline{\uppsi}+\zeta)\|^2] \\&\quad+\mathbb{E} \|b(\tau,\overline{\uppsi}_{\varrho(\tau,\overline{\uppsi}_\tau+\zeta_\tau^{(n)})}+\zeta_{\varrho(\tau,\overline{\uppsi}_\tau+\zeta_\tau^{(n)})}^{(n)}) -b(\tau,\overline{\uppsi}_{\varrho(\tau,\overline{\uppsi}_\tau+\zeta_\tau)}+\zeta_{\varrho(\tau,\overline{\uppsi}_\tau+\zeta_\tau)})\|^2\\&\quad+\int_0^\tau \int_{\gamma} \mathbb{E}\| f(s,\vartheta,\overline{\uppsi}_{\varrho(s,\overline{\uppsi}_s+\zeta_s^{(n)})}+\zeta_{\varrho(s,\overline{\uppsi}_s+\zeta_s^{(n)})}^{(n)})- f(s,\vartheta,\overline{\uppsi}_{\varrho(s,\overline{\uppsi}_s+\zeta_s)}+\zeta_{\varrho(s,\overline{\uppsi}_s+\zeta_s)})\|^2 \varkappa(d\vartheta)ds\\&\quad+Tr(\mathtt{Q})\int_0^\tau  \int_0^s\mathbb{E}\|h(z, \overline{\uppsi}_{\varrho(z,\overline{\uppsi}_{z}+\zeta_{z}^{(n)})}+\zeta_{\varrho(z,\overline{\uppsi}_{z}+\zeta_{z}^{(n)})}^{(n)})-h(z, \overline{\uppsi}_{\varrho(z,\overline{\uppsi}_{z}+\zeta_{z})}+\zeta_{\varrho(z,\overline{\uppsi}_{z}+\zeta_{z})})\|^2dz ds\\ &\rightarrow\  0 \ \text{as} \ n \rightarrow \infty.
\end{align*}
Next, for $\tau\in \mathop{\cup}\limits_{i=1}^\mathcal{N} (s_i,r_{i+1}]$, using $(S_4)$ and $(S_5)$, we obtain
\begin{align*}
\mathbb{E}\|(\Upsilon \zeta^{(n)})(\tau)-(\Upsilon \zeta)(\tau)\|^2  \leq \ 2&\mathbb{E}\|l_i(\overline{\uppsi}_{r_i}+\zeta_{r_i}^{(n)})-l_i(\overline{\uppsi}_{r_i}+\zeta_{r_i})\|^2 \\&+ 2\mathbb{E}\|m_i(\tau,\overline{\uppsi}_{\varrho(\tau,\overline{\uppsi}_\tau+\zeta_\tau^{(n)})}+\zeta_{\varrho(\tau,\overline{\uppsi}_\tau+\zeta_\tau^{(n)})}^{(n)})-m_i(\tau,\overline{\uppsi}_{\varrho(\tau,\overline{\uppsi}_\tau+\zeta_\tau)}+\zeta_{\varrho(\tau,\overline{\uppsi}_\tau+\zeta_\tau)})\|^2\\ &\rightarrow \ 0 \ \text{as} \ n \rightarrow\infty. 
\end{align*}
Similarly, for $\tau\in \mathop{\cup}\limits_{i=1}^\mathcal{N} (r_i,s_i],$ we get
\begin{align*}
 &\mathbb{E}\|(\Upsilon \zeta^{(n)})(\tau)-(\Upsilon \zeta)(\tau)\|^2\\&\leq 4\mathrm{M}^2\Big[3\mathbb{E}\|l_i(\overline{\uppsi}_{s_i}+\zeta_{s_i}^{(n)})-l_i(\overline{\uppsi}_{s_i}+\zeta_{s_i})\|^2 \\&\quad+3\mathbb{E} \|m_i(s_i,\overline{\uppsi}_ {\varrho(s_i,\overline{\uppsi}_{s_i}+\zeta_{s_i}^{(n)})}+\zeta_{\varrho(s_i,\overline{\uppsi}_{s_i}+\zeta_{s_i}^{(n)})}^{(n)})- m_i(s_i,\overline{\uppsi}_ {\varrho(s_i,\overline{\uppsi}_{s_i}+\zeta_{s_i})}+\zeta_{\varrho(s_i,\overline{\uppsi}_{s_i}+\zeta_{s_i})})\|^2 \\&\quad+3\mathbb{E}\| b(s_i,\overline{\uppsi}_ {\varrho(s_i,\overline{\uppsi}_{s_i}+\zeta_{s_i}^{(n)})}+\zeta_{\varrho(s_i,\overline{\uppsi}_{s_i}+\zeta_{s_i}^{(n)})}^{(n)})-b(s_i,\overline{\uppsi}_ {\varrho(s_i,\overline{\uppsi}_{s_i}+\zeta_{s_i})}+\zeta_{\varrho(s_i,\overline{\uppsi}_{s_i}+\zeta_{s_i})})\|^2\\&\quad+\mathbb{E}\| b(\tau,\overline{\uppsi}_{\varrho(\tau,\overline{\uppsi}_\tau+\zeta_\tau^n)}+\zeta_{\varrho(\tau,\overline{\uppsi}_\tau+\zeta_\tau^n)}^n)-b(\tau,\overline{\uppsi}_{\varrho(\tau,\overline{\uppsi}_\tau+\zeta_\tau)}+\zeta_{\varrho(\tau,\overline{\uppsi}_\tau+\zeta_\tau)})\|^2 \\&\quad+ \int_{s_i}^\tau \int_{\gamma} \mathbb{E}\| (f(s,\vartheta,\overline{\uppsi}_{\varrho(s,\overline{\uppsi}_s+\zeta_s^{(n)})}+\zeta_{\varrho(s,\overline{\uppsi}_s+\zeta_s^{(n)})}^{(n)})-f(s,\vartheta,\overline{\uppsi}_{\varrho(s,\overline{\uppsi}_s+\zeta_s)}+\zeta_{\varrho(s,\overline{\uppsi}_s+\zeta_s)}))\|^2 \varkappa(d\vartheta)ds \\&\quad+ \int_{s_i}^\tau \mathbb{E}\|\int_0^s \big[h(z,\overline{\uppsi}_{\varrho(z,\overline{\uppsi}_{z}+\zeta_{z}^{(n)})}+\zeta_{\varrho(z,\overline{\uppsi}_{z}+\zeta_{z}^{(n)})}^{(n)})-h(z,\overline{\uppsi}_{\varrho(z,\overline{\uppsi}_{z}+\zeta_{z})}+\zeta_{\varrho(z,\overline{\uppsi}_{z}+\zeta_{z})})\big]d\omega(z)\|^2ds\Big]\\&\leq 4\mathrm{M}^2\Big[3\mathbb{E}\|l_i(\overline{\uppsi}_{s_i}+\zeta_{s_i}^{(n)})-l_i(\overline{\uppsi}_{s_i}+\zeta_{s_i})\|^2 \\&\quad+3\mathbb{E} \|m_i(s_i,\overline{\uppsi}_ {\varrho(s_i,\overline{\uppsi}_{s_i}+\zeta_{s_i}^{(n)})}+\zeta_{\varrho(s_i,\overline{\uppsi}_{s_i}+\zeta_{s_i}^{(n)})}^{(n)})- m_i(s_i,\overline{\uppsi}_ {\varrho(s_i,\overline{\uppsi}_{s_i}+\zeta_{s_i})}+\zeta_{\varrho(s_i,\overline{\uppsi}_{s_i}+\zeta_{s_i})})\|^2 \\&\quad+3\mathbb{E}\| b(s_i,\overline{\uppsi}_ {\varrho(s_i,\overline{\uppsi}_{s_i}+\zeta_{s_i}^{(n)})}+\zeta_{\varrho(s_i,\overline{\uppsi}_{s_i}+\zeta_{s_i}^{(n)})}^{(n)})-b(s_i,\overline{\uppsi}_ {\varrho(s_i,\overline{\uppsi}_{s_i}+\zeta_{s_i})}+\zeta_{\varrho(s_i,\overline{\uppsi}_{s_i}+\zeta_{s_i})})\|^2\\&\quad+\mathbb{E}\| b(\tau,\overline{\uppsi}_{\varrho(\tau,\overline{\uppsi}_\tau+\zeta_\tau^n)}+\zeta_{\varrho(\tau,\overline{\uppsi}_\tau+\zeta_\tau^n)}^n)-b(\tau,\overline{\uppsi}_{\varrho(\tau,\overline{\uppsi}_\tau+\zeta_\tau)}+\zeta_{\varrho(\tau,\overline{\uppsi}_\tau+\zeta_\tau)})\|^2\\&\quad+ \int_{s_i}^\tau \int_{\gamma} \mathbb{E}\| (f(s,\vartheta,\overline{\uppsi}_{\varrho(s,\overline{\uppsi}_s+\zeta_s^{(n)})}+\zeta_{\varrho(s,\overline{\uppsi}_s+\zeta_s^{(n)})}^{(n)})-f(s,\vartheta,\overline{\uppsi}_{\varrho(s,\overline{\uppsi}_s+\zeta_s)}+\zeta_{\varrho(s,\overline{\uppsi}_s+\zeta_s)}))\|^2 \varkappa(d\vartheta)ds \\&\quad+Tr(\mathtt{Q}) \int_{s_i}^\tau \int_0^s \mathbb{E}\|h(z,\overline{\uppsi}_{\varrho(z,\overline{\uppsi}_{z}+\zeta_{z}^{(n)})}+\zeta_{\varrho(z,\overline{\uppsi}_{z}+\zeta_{z}^{(n)})}^{(n)})-h(z,\overline{\uppsi}_{\varrho(z,\overline{\uppsi}_{z}+\zeta_{z})}+\zeta_{\varrho(z,\overline{\uppsi}_{z}+\zeta_{z})})\|^2dz ds\Big]\\ &\   \rightarrow \ 0 \ as \ n \rightarrow \infty.
\end{align*}
Hence $\Upsilon: \overline{\mathtt{B}}_\alpha \ \rightarrow \ \overline{\mathtt{B}}_\alpha$ is continuous.\\ 
\textbf{Step \rom{3}:} The Mönch condition holds. \par Let $\mathbf{G}\subset \overline{\mathtt{B}}_\alpha$ be countable and $\mathbf{G}\subset \overline{\textbf{co}}(\{0\}\cup \Upsilon(\mathbf{G}))$. Without loss of generality, consider  $\mathbf{G}=\{\zeta^n\}_{n=1}^\infty$. We assert that $\{\Upsilon (\zeta^n)\}_{n=1}^\infty$ is equicontinuous on $\mathcal{I}$, then clearly $\mathbf{G}\subset \overline{\textbf{co}}(\{0\}\cup \Upsilon(\mathbf{G}))$ is  equicontinuous on $\mathcal{I}$. Obviously $\{\Upsilon (\zeta^n)\}_{n=1}^\infty, \ \zeta^n \in \overline{\mathtt{B}}_\alpha$ is equicontinuous at $\tau=0$.
By $(S_1),\ \mathcal{T}_q(\cdot)$ and $\mathcal{S}_q(\cdot)$ are strongly continuous, the map $\tau\mapsto \mathcal{T}_q(\tau- \cdot)$ and $\tau\rightarrow \mathcal{S}_q(\tau- \cdot)$ are continuous in the uniform operator topology on $(0,a]$. 
Let $s_i<\epsilon< \tau \leq r_{i+1},\ i=0,1,\cdots,\mathcal{N}$ and $\delta>0$ such that for $\eta_1,\eta_2 \in \mathop{\cup}\limits_{i=0}^\mathcal{N} (s_i,r_{i+1}]$ with $|\eta_1-\eta_2|< \delta$,
\begin{align*}
\max\big\{\|\mathcal{T}_q(\eta_1)-\mathcal{T}_q(\eta_2)\|,\  \|\mathcal{S}_q(\eta_1)-\mathcal{S}_q(\eta_2)\|\big\} \ <\ \epsilon.
\end{align*}
For every $\zeta^n \in \overline{\mathtt{B}}_\alpha,\ 0<|\eta|<\delta$ and $\tau,\tau+\eta \in(0,r_1],$ we get
\begin{align}
&\mathbb{E}\|(\Upsilon \zeta^n)(\tau+\eta)-(\Upsilon \zeta^n)(\tau)\|^2\ \nonumber \\&\leq \ 5 \Big[\mathbb{E}\|[\mathcal{T}_q(\tau+\eta)-\mathcal{T}_q(\tau)][-k_1(\overline{\uppsi}+\zeta^n)-b(0,\uppsi)]\|^2+\mathbb{E}\|[\mathcal{S}_q(\tau+\eta)-\mathcal{S}_q(\tau)][\xi_1-k_2(\overline{\uppsi}+\zeta^n)]\|^2 \nonumber \\&\qquad+\mathbb{E}\|b(\tau+\eta,\overline{\uppsi}_{\varrho(\tau+\eta,\ \overline{\uppsi}_{\tau+\eta}+\zeta_{\tau+\eta}^n)}+\zeta_{\varrho(\tau+\eta,\ \overline{\uppsi}_{\tau+\eta}+\zeta_{\tau+\eta}^n)}^n) -b(\tau,\overline{\uppsi}_{\varrho(\tau,\overline{\uppsi}_\tau+\zeta_\tau^n)}+\zeta_{\varrho(\tau,\overline{\uppsi}_\tau+\zeta_\tau^n)}^n)\|^2\nonumber \\&\qquad+ \mathbb{E}\|\int_0^\tau \int_{\gamma} [\mathcal{S}_q (\tau+\eta-s)-\mathcal{S}_q (\tau-s)] f(s,\vartheta,\overline{\uppsi}_{\varrho(s,\overline{\uppsi}_s+\zeta_s^n)}+\zeta_{\varrho(s,\overline{\uppsi}_s+\zeta_s^n)}^n)\widetilde{Z}(ds,d\vartheta)\nonumber \\&\qquad+\int_\tau^{\tau+\eta} \int_{\gamma} \mathcal{S}_q (\tau+\eta-s) f(s,\vartheta,\overline{\uppsi}_{\varrho(s,\overline{\uppsi}_s+\zeta_s^n)}+\zeta_{\varrho(s,\overline{\uppsi}_s+\zeta_s^n)}^n) \widetilde{Z}(ds,d\vartheta) \|^2 \nonumber \\&\qquad+\mathbb{E}\|\int_0^\tau [\mathcal{T}_q (\tau+\eta-s)-\mathcal{T}_q (\tau-s)] \Big(\int_{-\infty}^s (h(z, \overline{\uppsi}_{\varrho(z,\overline{\uppsi}_{z}+\zeta_{z}^n)}+\zeta_{\varrho(z,\overline{\uppsi}_{z}+\zeta_{z}^n)}^n)d\omega(z)\Big)ds \nonumber \\&\qquad+\int_\tau^{\tau+\eta} \mathcal{T}_q (\tau+\eta-s) \Big(\int_{-\infty}^s (h(z, \overline{\uppsi}_{\varrho(z,\overline{\uppsi}_{z}+\zeta_{z}^n)}+\zeta_{\varrho(z,\overline{\uppsi}_{z}+\zeta_{z}^n)}^n)d\omega(z)\Big)ds\|^2\Big]\nonumber \\ &\quad\leq \ 5 \mathop{\sum}\limits_{j=1}^5 \mathbb{E}\|P_j(t+\eta)-P_j(t)\|^2
\end{align}
Using $(S_1)$, $(S_3)$, it follows that
\begin{align}
\mathbb{E}\|P_1(\tau+\eta)-P_1(\tau)\|^2 \ \leq&\ \|\mathcal{T}_q(\tau+\eta)-\mathcal{T}_q(\tau)\|^2\| \ [k_1(\overline{\uppsi}+\zeta^n)+b(0,\uppsi)]\|^2\nonumber \\ \leq&\ 2\epsilon^2\big[M_b(1+\|\uppsi\|^2)+L_{k_1}\|\overline{\uppsi}+\zeta^n\|_\mathfrak{D}^2+\hat{L}_{k_1}\big] 
\end{align}  
\begin{align}
\mathbb{E}\|P_2(\tau+\eta)-P_2(\tau)\|^2 \ \leq&\ \|\mathcal{S}_q(\tau+\eta)-\mathcal{S}_q(\tau)\|^2\mathbb{E}\|\xi_1-k_2(\overline{\uppsi}+\zeta^n)\|^2\nonumber\\ \leq&\ 2 \epsilon^2\big[\|\xi_1\|^2+L_{k_2}\|\overline{\uppsi}+\zeta^n\|_\mathfrak{D}^2+\hat{L}_{k_2}\big]
\end{align}
\begin{align}
&\mathbb{E}\|P_3(\tau+\eta)-P_3(\tau)\|^2 \nonumber\\& \leq\  \|b(\tau+\eta,\overline{\uppsi}_{\varrho(\tau+\eta,\ \overline{\uppsi}_{\tau+\eta}+\zeta_{\tau+\eta}^n)}+\zeta_{\varrho(\tau+\eta,\ \overline{\uppsi}_{\tau+\eta}+\zeta_{\tau+\eta}^n)}^n) -b(\tau,\overline{\uppsi}_{\varrho(\tau,\overline{\uppsi}_\tau+\zeta_\tau)}+\zeta_{\varrho(\tau,\overline{\uppsi}_\tau+\zeta_\tau)})\|^2
\end{align}
By the Lebesgue dominated convergence theorem, $(S_7)$, and Remark $3.3.2$ in \cite{Zhuthesis}, we obtain
\begin{align}
&\mathbb{E}\|P_4(\tau+\eta)-P_4(\tau)\|^2 \nonumber\\ &\leq\ 2\mathbb{E}\|\int_0^\tau \int_{\gamma} [\mathcal{S}_q (\tau+\eta-s)-\mathcal{S}_q (\tau-s)] f(s,\vartheta,\overline{\uppsi}_{\varrho(s,\overline{\uppsi}_s+\zeta_s^n)}+\zeta_{\varrho(s,\overline{\uppsi}_s+\zeta_s^n)}^n\widetilde{Z}(ds,d\vartheta)\|^2\nonumber \\ 
&\quad+2\mathbb{E}\|\int_\tau^{\tau+\eta} \int_{\gamma} \mathcal{S}_q (\tau+\eta-s)f(s,\vartheta,\overline{\uppsi}_{\varrho(s,\overline{\uppsi}_s+\zeta_s^n)}+\zeta_{\varrho(s,\overline{\uppsi}_s+\zeta_s^n)}^n \widetilde{Z}(ds,d\vartheta) \|^2\nonumber \\ \ &\leq\ 2\mathbb{E}\int_0^\tau \int_{\gamma} \|[\mathcal{S}_q (\tau+\eta-s)-\mathcal{S}_q (\tau-s)] f(s,\vartheta,\overline{\uppsi}_{\varrho(s,\overline{\uppsi}_s+\zeta_s^n)}+\zeta_{\varrho(s,\overline{\uppsi}_s+\zeta_s^n)}^n)\|^2\varkappa(d\vartheta)ds\nonumber \\&\quad+2\int_\tau^{\tau+\eta} \int_{\gamma} \mathbb{E}\|\mathcal{S}_q (\tau+\eta-s)f(s,\vartheta,\overline{\uppsi}_{\varrho(s,\overline{\uppsi}_s+\zeta_s^n)}+\zeta_{\varrho(s,\overline{\uppsi}_s+\zeta_s^n)}^n) \|^2\varkappa(d\vartheta)ds\nonumber \\ &\leq\ 2\int_0^\tau \int_{\gamma} \|\mathcal{S}_q (\tau+\eta-s)-\mathcal{S}_q (\tau-s)\|^2\mathbb{E}\| f(s,\vartheta,\overline{\uppsi}_{\varrho(s,\overline{\uppsi}_s+\zeta_s^n)}+\zeta_{\varrho(s,\overline{\uppsi}_s+\zeta_s^n)}^n)\|^2\varkappa(d\vartheta)ds\nonumber \\&\quad+2\mathrm{M}^2\int_\tau^{\tau+\eta} \int_{\gamma} \mathbb{E}\|f(s,\vartheta,\overline{\uppsi}_{\varrho(s,\overline{\uppsi}_s+\zeta_s^n)}+\zeta_{\varrho(s,\overline{\uppsi}_s+\zeta_s^n)}^n) \|_\mathfrak{D}^2\varkappa(d\vartheta)ds\nonumber \\ &\leq\ 2\epsilon^2\int_0^\tau \mathfrak{n}(s)\mathcal{A}_f(r^*) ds +2\mathrm{M}^2\int_\tau^{\tau+\eta}\mathfrak{n}(s)\mathcal{A}_f(r^*) ds
\end{align} 
\begin{align}
&\mathbb{E}\|P_5(\tau+\eta)-P_5(\tau)\|^2 \ \nonumber\\&\leq\ 2\mathbb{E}\|\int_0^\tau[\mathcal{T}_q (\tau+\eta-s)-\mathcal{T}_q (\tau-s)]\Big(\int_{-\infty}^s h(z, \overline{\uppsi}_{\varrho(z,\overline{\uppsi}_{z}+\zeta_{z}^n)}+\zeta_{\varrho(z,\overline{\uppsi}_{z}+\zeta_{z}^n)}^n)d\omega(z)\Big)ds\|^2\nonumber\\&\qquad+2\mathbb{E}\|\int_\tau^{\tau+\eta} \mathcal{T}_q (\tau+\eta-s) \Big(\int_{-\infty}^s h(z, \overline{\uppsi}_{\varrho(z,\overline{\uppsi}_{z}+\zeta_{z}^n)}+\zeta_{\varrho(z,\overline{\uppsi}_{z}+\zeta_{z}^n)}^n)d\omega(z)\Big)ds\|^2 \nonumber \\ &\leq\ 2\epsilon^2 r_1\int_0^\tau [2M_h+2Tr(\mathtt{Q})\mathfrak{m}(s)\mathcal{A}_h(\|\overline{\uppsi}_{\varrho(z,\overline{\uppsi}_{z}+\zeta_{z}^n)}+\zeta_{\varrho(z,\overline{\uppsi}_{z}+\zeta_{z}^n)}^n\|_\mathfrak{D}^2)]ds\nonumber\\&\quad+2\mathrm{M}^2\int_\tau^{\tau+\eta}\big[2M_h+2Tr(\mathtt{Q})\mathfrak{m}(s)\mathcal{A}_h(\|\overline{\uppsi}_{\varrho(z,\overline{\uppsi}_{z}+\zeta_{z}^n)}+\zeta_{\varrho(z,\overline{\uppsi}_{z}+\zeta_{z}^n)}^n\|_\mathfrak{D}^2)\big]ds \nonumber \\ &\leq\ 2\epsilon^2 r_1^2 [2M_h+2Tr(\mathtt{Q})\mathcal{A}_h(r^*)\mathop{\sup}\limits_{s\in(0,r_1]}\mathfrak{m}(s)]+2\mathrm{M}^2\eta\int_\tau^{\tau+\eta}\big[2M_h+2Tr(\mathtt{Q})\mathcal{A}_h(r^*)\mathfrak{m}(s)\big]ds 
\end{align}
Thus from $(3.14)$ to $(3.18)$, observe that\\
$\mathbb{E}\|(\Upsilon \zeta^n)(\tau+\eta)-(\Upsilon \zeta^n)(\tau)\|^2\ \rightarrow\ 0$ as $\eta \rightarrow 0$, for each $\zeta^n \in \overline{\mathtt{B}}_\alpha$ and sufficiently small $\epsilon>0$.\\ 
Next, for  $\tau,\tau+\eta \in\mathop{\cup}\limits_{i=1}^\mathcal{N} (r_i,s_i],\ 0<|\eta|<\delta$, using $(S_3)$ and $(S_4)$, we obtain
\begin{align*}
&\mathbb{E}\|(\Upsilon \zeta^n)(\tau+\eta)-(\Upsilon \zeta^n)(\tau)\|^2 \ \\&\leq\ 2\mathbb{E}\|m_i(\tau+\eta,\overline{\uppsi}_{\varrho(\tau+\eta,\overline{\uppsi}_{\tau+\eta}+\zeta_{\tau+\eta}^n)}+\zeta_{\varrho(\tau+\eta,\overline{\uppsi}_{\tau+\eta}+\zeta_{\tau+\eta}^n)}^n)-m_i(\tau,\overline{\uppsi}_{\varrho(\tau,\overline{\uppsi}_\tau+\zeta_\tau^n)}+\zeta_{\varrho(\tau,\overline{\uppsi}_\tau+\zeta_\tau^n)}^n)\|^2+2\epsilon^2N_1^2\|\uppsi\|_\mathfrak{D}^2
\end{align*} $  \ \rightarrow 0$ as $\eta \rightarrow 0$.\\
Analogously, for  $\tau,\tau+\eta \in\mathop{\cup}\limits_{i=1}^\mathcal{N} (s_i,r_{i+1}],\ 0<|\eta|<\delta$, using $(S_1)$, $(S_4)$, we get
\begin{align*}
&\mathbb{E}\|(\Upsilon \zeta^n)(\tau+\eta)-(\Upsilon \zeta^n)(\tau)\|^2 \ \\&\leq\ 7\Big[\mathbb{E}\|[\mathcal{T}_q (\tau+\eta-s)-\mathcal{T}_q (\tau-s)][ l_i(\overline{\uppsi}_{s_i}+\zeta_{s_i}^n) + m_i(s_i,\overline{\uppsi}_ {\varrho(s_i,\overline{\uppsi}_{s_i}+\zeta_{s_i}^n)}+\zeta_{\varrho(s_i,\overline{\uppsi}_{s_i}+\zeta_{s_i}^n)}^n) \\&\qquad- b(s_i,\overline{\uppsi}_{\varrho(s_i,\overline{\uppsi}_{s_i}+\zeta_{s_i}^n)}+\zeta_{\varrho(s_i,\overline{\uppsi}_{s_i}+\zeta_{s_i}^n)}^n)]\|^2+\mathbb{E}\|[\mathcal{T}_q(\tau+\eta)-\mathcal{T}_q(\tau)]\uppsi(0)\|^2\\&\qquad+\mathbb{E}\| b(\tau+\eta,\overline{\uppsi}_{\varrho(\tau+\eta,\overline{\uppsi}_{\tau+\eta}+\zeta_{\tau+\eta}^n)}+\zeta_{\varrho(\tau,\overline{\uppsi}_\tau+\zeta_{\tau+\eta}^n)}^n)-b(\tau,\overline{\uppsi}_{\varrho(\tau,\overline{\uppsi}_\tau+\zeta_\tau^n)}+\zeta_{\varrho(\tau,\overline{\uppsi}_\tau+\zeta_\tau^n)}^n)\|^2\\&\qquad+ \mathbb{E}\| \int_{s_i}^\tau \int_{\gamma} [\mathcal{S}_q (\tau+\eta-s)-\mathcal{S}_q (\tau-s)] f(s,\vartheta,\overline{\uppsi}_{\varrho(\tau,\overline{\uppsi}_s+\zeta_s^n)}+\zeta_{\varrho(s,\overline{\uppsi}_s+\zeta_s^n)}^n) \widetilde{Z}(ds,d\vartheta)\|^2 \\&\qquad+ \mathbb{E}\| \int_\tau^{\tau+\eta} \int_{\gamma} \mathcal{S}_q (\tau+\eta-s) f(s,\vartheta,\overline{\uppsi}_{\varrho(s,\overline{\uppsi}_s+\zeta_s^n)}+\zeta_{\varrho(s,\overline{\uppsi}_s+\zeta_s^n)}^n) \widetilde{Z}(ds,d\vartheta)\|^2 \\&\qquad+ \mathbb{E}\| \int_{s_i}^\tau [\mathcal{T}_q (\tau+\eta-s)-\mathcal{T}_q (\tau-s)] \Big(\int_{-\infty}^s h(z,\overline{\uppsi}_{\varrho(z,\overline{\uppsi}_{z}+\zeta_{z}^n)}+\zeta_{\varrho(z,\overline{\uppsi}_{z}+\zeta_{z}^n)}^n)d\omega(z)\Big)ds\|^2\\&\qquad+ \mathbb{E}\| \int_\tau^{\tau+\eta} \mathcal{T}_q (\tau+\eta-s) \Big(\int_{-\infty}^s h(z,\overline{\uppsi}_{\varrho(z,\overline{\uppsi}_{z}+\zeta_{z}^n)}+\zeta_{\varrho(z,\overline{\uppsi}_{z}+\zeta_{z}^n)}^n)d\omega(z)\Big)ds\|^2\Big]
 \\&\ \leq \ 21\|\mathcal{T}_q (\tau+\eta-s)-\mathcal{T}_q (\tau-s)\|^2[\mathbb{E}\| l_i(\overline{\uppsi}_{s_i}+\zeta_{s_i}^n)\|^2 +\mathbb{E}\| m_i(s_i,\overline{\uppsi}_ {\varrho(s_i,\overline{\uppsi}_{s_i}+\zeta_{s_i}^n)}+\zeta_{\varrho(s_i,\overline{\uppsi}_{s_i}+\zeta_{s_i}^n)}^n)\|^2 \\&\qquad+\mathbb{E}\| b(s_i,\overline{\uppsi}_{\varrho(s_i,\overline{\uppsi}_{s_i}+\zeta_{s_i}^n)}+\zeta_{\varrho(s_i,\overline{\uppsi}_{s_i}+\zeta_{s_i}^n)}^n)\|^2]+7\|\mathcal{T}_q(\tau+\eta)-\mathcal{T}_q(\tau)\|^2N_1^2\|\uppsi\|_\mathfrak{D}^2\\&\qquad+7\mathbb{E}\| b(\tau+\eta,\overline{\uppsi}_{\varrho(\tau+\eta,\overline{\uppsi}_{\tau+\eta}+\zeta_{\tau+\eta}^n)}+\zeta_{\varrho(\tau,\overline{\uppsi}_\tau+\zeta_{\tau+\eta}^n)}^n)-b(\tau,\overline{\uppsi}_{\varrho(\tau,\overline{\uppsi}_\tau+\zeta_\tau^n)}+\zeta_{\varrho(\tau,\overline{\uppsi}_\tau+\zeta_\tau^n)}^n)\|^2 \\&\qquad+7 \mathbb{E} \int_{s_i}^\tau \int_{\gamma} \|[\mathcal{S}_q (\tau+\eta-s)-\mathcal{S}_q (\tau-s)]f(s,\vartheta,\overline{\uppsi}_{\varrho(\tau,\overline{\uppsi}_s+\zeta_s^n)}+\zeta_{\varrho(s,\overline{\uppsi}_s+\zeta_s^n)}^n)\|^2 \varkappa(d\vartheta)ds \\&\qquad+7 \mathbb{E} \int_\tau^{\tau+\eta} \int_{\gamma} \|\mathcal{S}_q (\tau+\eta-s) f(s,\vartheta,\overline{\uppsi}_{\varrho(s,\overline{\uppsi}_s+\zeta_s^n)}+\zeta_{\varrho(s,\overline{\uppsi}_s+\zeta_s^n)}^n)\|^2 \varkappa(d\vartheta)ds \\&\qquad+7  \int_{s_i}^\tau \mathbb{E}\|[\mathcal{T}_q (\tau+\eta-s)-\mathcal{T}_q (\tau-s)] \Big(\int_{-\infty}^s h(z,\overline{\uppsi}_{\varrho(z,\overline{\uppsi}_{z}+\zeta_{z}^n)}+\zeta_{\varrho(z,\overline{\uppsi}_{z}+\zeta_{z}^n)}^n)d\omega(z)\Big)\|^2ds \\&\qquad+7 \int_\tau^{\tau+\eta} \mathbb{E}\| \mathcal{T}_q (\tau+\eta-s)\Big(\int_{-\infty}^s h(z,\overline{\uppsi}_{\varrho(z,\overline{\uppsi}_{z}+\zeta_{z}^n)}+\zeta_{\varrho(z,\overline{\uppsi}_{z}+\zeta_{z}^n)}^n)d\omega(z)\Big)\|^2ds\\
 &\ \leq\ 21\epsilon^2\big[\mathcal{A}_{l_i}(r^{**})+M_i(1+r^*)+M_b(1+r^*)\big]+7\epsilon^2 \|\uppsi\|_\mathfrak{D}^2+7\epsilon^2 \int_{s_i}^\tau \mathfrak{n}(s)\mathcal{A}_f(r^*) ds\\&\qquad+7\mathrm{M}^2  \int_\tau^{\tau+\eta} \mathfrak{n}(s)\mathcal{A}_f(r^*) ds+7 \epsilon^2(r_{i+1}-s_i) \int_{s_i}^\tau[2M_h+2Tr(\mathtt{Q})\mathcal{A}_h(r^*)\mathfrak{m}(s)]ds\\&\qquad+7\mathrm{M}^2\eta \int_\tau^{\tau+\eta}[2M_h+2Tr(\mathtt{Q})\mathcal{A}_h(r^*)\mathfrak{m}(s)]ds
\end{align*} $\qquad \rightarrow 0$ as $\eta \rightarrow 0$ independently of $\zeta^n$.\\
Hence $\{\Upsilon \zeta^n\}_{n=1}^\infty$ is equicontinuous on $\mathcal{I}$. Consequently, $\Upsilon(\mathbf{G})$ is equicontinuous on $\mathcal{I}$.

Next, to prove $\overline{\mathbf{G}}$ is compact, it suffices to prove that $\mu(\mathbf{G})=0$. For  $\tau\in [0,r_1]$, using hypothesis, Lemma \ref{lemma2.2} and \ref{lemma2.3}, we obtain
\begin{align*}
&\mu(\Upsilon\{\zeta^n(\tau)\}_{n=1}^\infty)\ \\&\leq\ \mu(\{\mathcal{T}_q(\tau)k_1(\overline{\uppsi}+\zeta^n)\}_{n=1}^\infty)+\mu(\{\mathcal{S}_q(\tau)k_2(\overline{\uppsi}+\zeta^n)\}_{n=1}^\infty)+\mu(\{b(\tau,\overline{\uppsi}_{\varrho(\tau,\overline{\uppsi}_\tau+\zeta_\tau^n)}+\zeta_{\varrho(\tau,\overline{\uppsi}_\tau+\zeta_\tau^n)}^n)\}_{n=1}^\infty)\\&\quad+\mu(\{\int_0^\tau \int_{\gamma} \mathcal{S}_q (\tau-s) f(s,\vartheta,\overline{\uppsi}_{\varrho(s,\overline{\uppsi}_s+\zeta_s^n)}+\zeta_{\varrho(s,\overline{\uppsi}_s+\zeta_s^n)}^n\widetilde{Z}(ds,d\vartheta)\}_{n=1}^\infty)\\&\quad+\mu(\{\int_0^\tau \mathcal{T}_q (\tau-s)\big(\int_{-\infty}^s h(z, \overline{\uppsi}_{\varrho(z,\overline{\uppsi}_{z}+\zeta_{z)}^n}+\zeta_{\varrho(z,\overline{\uppsi}_{z}+\zeta_{z)}^n}^n)d\omega(z)\big)ds\}_{n=1}^\infty)\\\ &\leq\ \mathrm{M} (l_1^*+l_2^*) \mathop{\sup}\limits_{-\infty<\theta\leq0}\mu(\{\overline{\uppsi}(\tau+\theta)+\zeta^n(\tau+\theta)\}_{n=1}^\infty)+l_b(\tau)\mathop{\sup}\limits_{-\infty<\theta\leq0}\mu(\{\overline{\uppsi}(\tau+\theta)+\zeta^n(\tau+\theta)\}_{n=1}^\infty)\\&\quad+2\mathrm{M} \int_0^\tau \int_{\gamma}l_f(s,\vartheta)\mathop{\sup}\limits_{-\infty<\theta\leq0}\mu\big(\{\overline{\uppsi}(s+\theta)+\zeta^n(s+\theta)\}_{n=1}^\infty)\widetilde{Z}(ds,d\vartheta)\\&\quad+2\mathrm{M}\int_0^\tau\big[\mu(k(\tau)+\{\int_0^s h(z, \overline{\uppsi}_{\varrho(z,\overline{\uppsi}_{z}+\zeta_{z)}^n}+\zeta_{\varrho(z,\overline{\uppsi}_{z}+\zeta_{z)}^n}^n)d\omega(z))\}_{n=1}^\infty\big)\big]ds\\ \ &\leq\ \mathrm{M} (l_1^*+l_2^*) \mathop{\sup}\limits_{-\infty<\theta\leq0}\mu(\{\overline{\uppsi}(\tau+\theta)+\zeta^n(\tau+\theta)\}_{n=1}^\infty)+l_b(\tau)\mathop{\sup}\limits_{-\infty<\theta\leq0}\mu(\{\overline{\uppsi}(\tau+\theta)+\zeta^n(\tau+\theta)\}_{n=1}^\infty)\\&\quad+4\mathrm{M}\int_0^\tau \int_\gamma l_f(s,\vartheta)\mathop{\sup}\limits_{-\infty<\theta\leq0}\mu(\{\overline{\uppsi}(s+\theta)+\zeta^n(s+\theta)\}_{n=1}^\infty\widetilde{Z}(ds,d\vartheta)\\&\quad+4\mathrm{M}[r_1(Tr(\mathtt{Q}))]^{1/2}\big(\int_0^\tau\big\{\int_0^s \Big(l_h(z) \mathop{\sup}\limits_{-\infty<\theta\leq0}\mu(\{\overline{\uppsi}(z+\theta)+\zeta^n(z+\theta)\big\}_{n=1}^\infty\Big)dz\}^2ds\big)^{1/2}  \\ \ &\leq\ \big[ \mathrm{M} (l_1^*+l_2^*)+l_b^*+4\mathrm{M} \mho +4\mathrm{M}\sqrt{r_1}\sqrt{Tr(\mathtt{Q})}\|\chi\|_{L^2}\big]\mathop{\sup}\limits_{0\leq z\leq \tau}\mu(\{\zeta^n(z)\}_{n=1}^\infty),
\end{align*}
where $\chi(s) = \int_0^s l_h(z)dz$.\\ 
Similarly, for $\tau\in \mathop{\cup}\limits_{i=1}^\mathcal{N}(r_i,s_i]$,
\begin{align*}
&\mu(\Upsilon\{\zeta^n(\tau)\}_{n=1}^\infty)\ \\&\leq\ \mu(\{l_i(\overline{\uppsi}_{r_i}+\zeta_{r_i}^n)\}_{n=1}^\infty)+\mu(\{m_i(\tau,\overline{\uppsi}_{\varrho(\tau,\overline{\uppsi}_\tau+\zeta_\tau^n)}+\zeta_{\varrho(\tau,\overline{\uppsi}_\tau+\zeta_\tau^n)}^n)\}_{n=1}^\infty)\\ \ &\leq\ L_i \mathop{\sup}\limits_{-\infty<\theta\leq0}\mu(\{\overline{\uppsi}(r_i+\theta)+\zeta^n(r_i+\theta)\}_{n=1}^\infty)+l_{m_i}(\tau)\mathop{\sup}\limits_{-\infty<\theta\leq0}\mu(\{\overline{\uppsi}(\tau+\theta)+\zeta^n(\tau+\theta)\}_{n=1}^\infty)\\ \ &\leq\ (L_i+l_{m_i}(\tau)) \mathop{\sup}\limits_{r_i\leq z\leq \tau}\mu(\{\zeta^n(z)\}_{n=1}^\infty)\\ \ &\leq \ (L_i+l_{m_i}^*)\ \mu(\{\zeta^n\}_{n=1}^\infty)
\end{align*}
Now, for $\tau\in \mathop{\cup}\limits_{i=1}^\mathcal{N}(s_i,r_{i+1}]$,
\begin{align*}
&\mu(\Upsilon\{\zeta^n(\tau)\}_{n=1}^\infty)\ \\&\leq\ \mu(\mathcal{T}_q(\tau-s_i)\{ l_i(\overline{\uppsi}_{s_i}+\zeta_{s_i}^n) + m_i(s_i,\overline{\uppsi}_ {\varrho(s_i,\overline{\uppsi}_{s_i}+\zeta_{s_i}^n)}+\zeta_{\varrho(s_i,\overline{\uppsi}_{s_i}+\zeta_{s_i}^n)}^n) \\&\quad- b(s_i,\overline{\uppsi}_{\varrho(s_i,\overline{\uppsi}_{s_i}+\zeta_{s_i}^n)}+\zeta_{\varrho(s_i,\overline{\uppsi}_{s_i}+\zeta_{s_i}^n)}^n)\}_{n=1}^\infty)+\mu( \{b(\tau,\overline{\uppsi}_{\varrho(\tau+\eta,\overline{\uppsi}_{\tau+\eta}+\zeta_{\tau+\eta}^n)}+\zeta_{\varrho(\tau,\overline{\uppsi}_\tau+\zeta_{\tau+\eta}^n)}^n)\}_{n=1}^\infty)\\&\quad+\mu\big(\big\{\int_{s_i}^\tau \int_{\gamma} \mathcal{S}_q (\tau-s) f(s,\vartheta,\overline{\uppsi}_{\varrho(s,\overline{\uppsi}_s+\zeta_s^n)}+\zeta_{\varrho(s,\overline{\uppsi}_s+\zeta_s^n)}^n) \widetilde{Z}(ds,d\vartheta)\big\}_{n=1}^\infty\big)\\&\quad+ \mu\big(\big\{\int_{s_i}^\tau \mathcal{T}_q (\tau-s) \big(\int_{-\infty}^s h(z,\overline{\uppsi}_{\varrho(z,\overline{\uppsi}_z+\zeta_z^n)}+\zeta_{\varrho(z,\overline{\uppsi}_{z}+\zeta_z^n)}^n d\omega(z)\big)ds\big\}_{n=1}^\infty\big)
\end{align*}
Using similar arguments as above, we have
\begin{align*}
&\mu(\Upsilon\{\zeta^n(\tau)\}_{n=1}^\infty)\ \\&\leq\ \Big[\mathrm{M}\big(L_i+l_{m_i}(\tau)+l_b(\tau)\big)+l_b(\tau)+4\mathrm{M}\int_{s_i}^\tau \int_\gamma l_f(s,\vartheta)\widetilde{Z}(ds,d\vartheta)\\&\qquad+4\mathrm{M}[(r_{i+1}-s_i)(Tr(\mathtt{Q}))]^{1/2}\left\{\int_{s_i}^\tau\int_0^s l_h^2(z)dz ds \right\}^{1/2}\Big] \mathop{\sup}\limits_{-\infty<\theta\leq0}\mu(\{\overline{\uppsi}(\tau+\theta)+\zeta^n(\tau+\theta)\}_{n=1}^\infty\\ \ &\leq\ \big[ \mathrm{M} (L_i+l_{m_i}^*+l_b^*)+l_b^*+4\mathrm{M}\mho+4\mathrm{M}\sqrt{(r_{i+1}-s_i)}\sqrt{Tr(\mathtt{Q})}\|\chi\|_{L^2}\mathop{\sup}\limits_{s_i<z\leq \tau}\mu(\{\zeta^n(z)\}_{n=1}^\infty)
\end{align*}
Therefore, for all $\tau\in\mathcal{I}$,
\begin{equation*}
\begin{aligned}
\mu(\Upsilon\{\zeta^n(\tau)\}_{n=1}^\infty)\ &\leq \ \mathop{\max}\limits_{0\leq i\leq \mathcal{N}}\Big\{l_1^*+l_2^*+L_i+l_{m_i}^*+ \mathrm{M}(L_i+l_{m_i}^*+l_b^*)+l_b^*+4\mathrm{M}\mho\\&\qquad+4\mathrm{M}\sqrt{a}\sqrt{Tr(\mathtt{Q})}\|\chi\|_{L^2} \Big\}  \mathop{\sup}\limits_{0\leq z\leq \tau}\mu(\{\zeta^n(z)\}_{n=1}^\infty)\\ &=\   \triangle_2\  \mu(\{\zeta^n\}_{n=1}^\infty).
\end{aligned}
\end{equation*}
Thus, $ \mu(\Upsilon(\mathbf{G})) \ \leq\ \triangle_2\  \mu(\mathbf{G})$.

 Now, Mönch’s condition implies that $\mu(\mathbf{G})\leq \mu( \overline{\textbf{co}}(\{0\}\cup \Upsilon(\mathbf{G})))=\mu(\Upsilon(\mathbf{G}))\leq \triangle_2\  \mu(\mathbf{G})$, which shows by the Inequality (\ref{3.1}) that $\mu(\mathbf{G})=0$.
Now, by Lemma \ref{lemma2.4}, we conclude that $\Upsilon$ has a fixed point $\zeta^*$ in $\overline{\mathtt{B}}_\alpha$. 
Hence $\xi(\tau)= \bar{\zeta}^*(\tau)+\overline{\uppsi}(\tau),\  \tau \in (-\infty,a]$, is a mild solution of the system (\ref{mainequation}).
\end{proof}
\section{An Example}\label{Sect.4}
We construct the following nonlocal neutral fractional stochastic differential equation with NIIs and SDD:
\begin{equation}
\begin{cases}
^CD_\tau^q\Big[\xi(\tau,\upsilon)-\int_{-\infty}^\tau g_1(\tau,s-\tau,\upsilon,\xi(s-\sigma_1(\tau)\sigma_2(\|\xi(\tau)\|),\upsilon))ds\Big]\\=\frac{\partial^2}{\partial \upsilon^2}\Big[\xi(\tau,\upsilon)-\int_{-\infty}^\tau g_1(\tau,s-\tau,\upsilon,\xi(s-\sigma_1(\tau)\sigma_2(\|\xi(\tau)\|),\upsilon))ds\Big]\\\ +\frac{1}{\Gamma{(2-q)}}\int_0^\tau(\tau-s)^{1-q}\big[\int_\gamma \vartheta \int_{-\infty}^s g_2(s,z-s,\upsilon \xi(z-\sigma_1(s)\sigma_2(\|\xi(s)\|),\upsilon))dz\Big]\widetilde{Z}(ds,d\vartheta)\\ \ +\frac{1}{\Gamma{(1-q)}}\int_0^\tau(\tau-s)^{-q}\Big[\int_{-\infty}^\tau\Big( \int_{-\infty}^s g_3(s,z-s,\upsilon,\xi(z-\sigma_1(s)\sigma_2(\|\xi(s)\|),\upsilon))dz\Big)d\omega(s)\Big]ds, \\ \qquad (\tau,\upsilon)\in \mathop{\cup}\limits_{i=1}^\mathcal{N} (s_i,r_{i+1}] \times [0,\pi]; \\
 \xi(\tau,\upsilon)= \int_{-\infty}^{r_i}a_i(s-r_i)\xi(s,\upsilon)ds+\int_{-\infty}^\tau \tilde{a}_i(\tau,s-\tau,\upsilon,\xi(s-\sigma_1(\tau)\sigma_2(\|\xi(\tau)\|),\upsilon))ds,\\ \qquad (\tau,\upsilon)\in \mathop{\cup}\limits_{i=1}^\mathcal{N}(r_i,s_i]\times [0,\pi];
\\
\xi(\tau,0)=\xi(\tau,\pi)=0,\quad \xi^\prime(\tau,0)=\xi^\prime(\tau,\pi)=0, \ \tau\in (0,a];\\
 \xi(\tau,\upsilon)=\uppsi(\tau,\upsilon)\in \mathfrak{D},\ \xi^\prime(\tau,\upsilon)= \uppsi_1(\tau,\upsilon)\in \mathcal{V},\ (\tau,\upsilon)\in (-\infty,0)\times[0,\pi];\\
 \xi(0,\upsilon)+\int_0^\pi f_1(\upsilon,z)\xi(\tau,z)dz= \xi_0\in \mathfrak{D},\ (\tau,\upsilon)\in (-\infty,0)\times[0,\pi];\\ 
[\xi(\tau,\upsilon)-g_1(\tau,\xi(\tau-\sigma_1(\tau)\sigma_2(\|\xi(\tau)\|)),\upsilon)]_{\tau=0}^\prime+\int_0^\pi f_2(\upsilon,z)\xi(\tau,z)dz= \xi_0^\prime \in \mathcal{V},\\ \  (\tau,\upsilon)\in \mathcal{I}_0\times[0,\pi];\\
 [\xi(\tau,\upsilon)-g_1(\tau,\xi(\tau-\sigma_1(\tau)\sigma_2(\|\xi(\tau)\|)),\upsilon)]_{\tau=s_i}^\prime=0,
\end{cases}
\end{equation}
where $1<q<2,\ 0=r_0=s_0<r_1<s_1<\cdots<s_\mathcal{N}<r_{\mathcal{N}+1}=a$ are prefixed. The mappings $\varrho_1,\varrho_2:[0,\infty)\rightarrow[0,\infty),$ $f_1,f_2:[0,\pi]\times[0,\pi]\rightarrow \mathds{R}$, $a_i:\mathds{R}\rightarrow \mathds{R}$, $\tilde{a}_i:\mathds{R}^4\rightarrow \mathds{R}, \    1\leq~i~\leq~\mathcal{N}$ and $g_j:\mathds{R}^4\rightarrow \mathds{R},\  1\leq j\leq3$  are continuous. Here $\omega(\cdot)$ and $ \widetilde{Z}(ds,d\vartheta)$ denote the standard one dimensional $\mathtt{Q}$-Wiener process and compensating martingale measure, respectively.

 Let $\hat{a} \in \mathds{R}^+\cup\{0\}$, and  $\widetilde{y}: (-\infty,-\hat{a}]\rightarrow \mathds{R}^+\cup\{0\}$ be a measurable function such that $(g$-$5)$--$(g$-$7)$ introduced in \cite{Hino} hold. Define $\mathtt{PC}_{\hat{a}} \times  L^p(\widetilde{y},\mathcal{V}) =\big\{\Psi:\mathcal{I}_0\rightarrow \mathcal{V}/ \ \Psi|_{[-\hat{a},0]} \in  \mathtt{PC}([-\hat{a},0],\mathcal{V}),\ \Psi(\cdot) $ is Lebesgue measurable on $(-\infty,-\hat{a})$ and
$\|\Psi\|_\mathfrak{D}= \mathop{\sup}\limits_{-\hat{r}\leq s\leq 0}\|\Psi(s)\|+\Big(\int_{-\infty}^{-\hat{a}}\widetilde{y}(s)\|\Psi(s)\|^p ds\Big)^\frac{1}{p},\ p\geq1\big\}$.
Clearly, $\mathfrak{D}= \mathtt{PC}_0 \times L^2(\widetilde{y},\mathcal{V})$ satisfies Axioms $\rom{1}-\rom{3}$ with $N_1=1,\  N_2(\tau)=1+(\int_{-\tau}^0\widetilde{y}(s)ds)^{\frac{1}{2}}, \tau~\geq ~0$, and $N_3(\tau)= G(-\tau)^{\frac{1}{2}}$, where G is as defined in $(g$-$6)$.

 Choose $\mathcal{V}=\mathcal{K}=L^2([0,\pi],\mathds{R})$ with the norm $\|\cdot\|$ and $\mathscr{A}:D(\mathscr{A})\subset \mathcal{V}\rightarrow \mathcal{V}$ is defined by $\mathscr{A}\xi=\frac{\partial^2 \xi}{\partial \upsilon^2}$ with the domain $D(\mathscr{A})=\{\xi\in \mathcal{V}:\ \xi,\xi^\prime$ are absolutely continuous, $\xi^{\prime \prime}\in \mathcal{V},\ \xi(0)=\xi(\pi)=0\}$. Evidently, $\mathscr{A}$ is densely defined in $\mathcal{V}$ and it is the infinitesimal generator of the  resolvent family $\{\mathcal{R}_q(\tau)\}_{\tau\geq 0}$ on $\mathcal{V}$. Then there exists   $\mathrm{M}>0$ to ensure that $\mathop{\sup}\limits_{\tau\in \mathcal{I}}\|\mathcal{T}_q(\tau)\|  \vee  \mathop{\sup}\limits_{\tau\in \mathcal{I}}\|\mathcal{S}_q(\tau)\| \leq \mathrm{M}$ (see \cite{Shu&Wang2012}). That is $(S_1)$ holds. For $(\tau,\uppsi)\in \mathcal{I}\times \mathfrak{D}$, set $\uppsi(\varpi)(\upsilon)= \uppsi(\varpi,\upsilon)$, $\xi(\tau)(\upsilon)= \xi(\tau,\upsilon)$ and $\varrho(\tau,\uppsi)=\tau-\sigma_1(\tau)\sigma_2(\|\uppsi(0)\|)$, where $(\varpi,\upsilon)\in \mathcal{I}_0\times [0,\pi]$. The functions $ b: \mathcal{I}\times \mathfrak{D}\rightarrow \mathcal{V},\ f:\mathcal{I}\times \gamma \times \mathfrak{D} \rightarrow \mathcal{V},\ h: \mathcal{I} \times \mathfrak{D} \rightarrow \mathbb{L}_\mathtt{Q}(\mathcal{V}),\ l_i:\mathfrak{D} \rightarrow \mathcal{V},\ m_i:(r_i,s_i]\times \mathfrak{D}\rightarrow \mathcal{V},\ i=1,2,\cdots,\mathcal{N}$ are defined by
\begin{align*}
b(\tau,\uppsi)(\upsilon)&=\int_{-\infty}^0 g_1(\tau,\varpi,\upsilon,\uppsi(\varpi)(\upsilon))d\varpi,\\
 f(\tau,\vartheta,\uppsi)(\upsilon)&= \vartheta\int_{-\infty}^0 g_2(\tau,\varpi,\upsilon,\uppsi(\varpi)(\upsilon))d\varpi,\\
h(\tau,\uppsi)(\upsilon)&= \int_{-\infty}^0 g_3(\tau,\varpi,\upsilon,\uppsi(\varpi)(\upsilon))d\varpi,\\
l_i(\uppsi)(\upsilon)&=\int_{-\infty}^0 a_i(\varpi)\uppsi(\varpi)(\upsilon)d\varpi,\\
m_i(\tau,\uppsi)(\upsilon)&=\int_{-\infty}^0 \tilde{a}_i(\tau,\varpi,\upsilon,\uppsi(\varpi)(\upsilon))d\varpi.
\end{align*}
Thus the system $(4.1)$ can be written in the abstract form (\ref{mainequation}). If $b,f,h,l_i$ and $m_i$ satisfies the assumptions $(S_2)-(S_7)$, then in virtue of Theorem \ref{thm3.1} the system $(4.1)$ admits a mild solution on $\mathcal{I}.$

\section{Conclusion} In literature, only few articles concerned with the existence of a solution in which the resolvent operator is taken to be noncompact. In this article, the existence of a solution for neutral stochastic fractional differential equations of order $q\in(0,2)$ involving NIIs and SDD with the Poisson jumps and the Wiener process has been derived by applying measure of non-compactness via fixed point theorem. An example is constructed to validate the obtained theory. By using the same approach one can extend the obtained result for the given system \ref{mainequation} of order $2<q<3$. In future, we will derive sufficient conditions to guarantee the existence of a solution for the inclusion system associated with (\ref{mainequation}).

\end{document}